\newtheorem{Th}{Theorem}[section]
\newtheorem{Lem}[Th]{Lemma}
\newtheorem{Cor}[Th]{Corollary}
\newenvironment{altproof}[1]
{\noindent%\addvspace{0.3cm}
{\em Proof of {#1}}.}
{\nopagebreak\mbox{}\hfill $\Box$\par\addvspace{0.5cm}}
\newcommand{\wt}{\widetilde}
   \newcommand{\vp}{\varphi}
   \newcommand{\eps}{\varepsilon}
   \def\div{\mathop{\mathrm{div}}}
     \def\span{\mathrm{span}}
   \def\supp{\mathrm{supp}}
   \def\d{\diamond}
   \def\R{\mathbb{R}}
   \def\curl{\mathrm{curl}}
   \def\V{\mathcal{V}}
   \def\Wc{\mathscr{W}}
   \def\W{\mathcal{W}}
   \def\C{\mathbb{C}}
    \newcommand{\UU}{u}
    \newcommand{\SO}{\cS\cO}
\def\rh{\rightharpoonup}
\def\rn{\mathbb{R}^N}
\newcommand{\cC}{{\mathcal C}}
\newcommand{\cD}{{\mathcal D}}
\newcommand{\cDp}{\cD^{1,p}(\R^3,\R^3)}
\newcommand{\cH}{{\mathcal H}}
\newcommand{\cM}{{\mathcal M}}
\newcommand{\cN}{{\mathcal N}}
\newcommand{\cO}{{\mathcal O}}
\newcommand{\cS}{{\mathcal S}}
\newcommand{\cT}{{\mathcal T}}
\newcommand{\cV}{{\mathcal V}}
\newcommand{\cW}{{\mathcal W}}
\newcommand{\Om}{\Omega}
\def\r{\mathbb{R}}
\def\r3{\mathbb{R}^3}
\def\Wc{\cD^{1,p}(\curl;\R^3)}
\def\Wco{\cD^{1,p}_{\cO}(\curl;\R^3)}
\def\rn{\mathbb{R}^N}
\def\eps{\varepsilon}
\def\rh{\rightharpoonup}
\def\io{\int_{\Omega}}
\def\ir3{\int_{\r3}}
\def\vp{\varphi}
\def\wt{\widetilde}
\def\lam{\lambda}
\def\lam1{\lambda_1}
\def\d1p{\mathcal{D}^{1,p}}
\def\cm{\mathcal{M}}
\def\curlop{\nabla\times}
\newcommand{\weakto}{\rightharpoonup}
\numberwithin{equation}{section}
\title{Optimizers in Sobolev-curl inequalities}
\author[J. Mederski]{Jaros\l aw Mederski}
\author[A. Szulkin]{Andrzej Szulkin}
\address[J. Mederski]{\newline\indent
	{Institute of Mathematics,
		\newline\indent 
		Polish Academy of Sciences,
		\newline\indent 
		ul. \'Sniadeckich 8, 00-656 Warsaw, Poland,		\newline\indent
		and
		\newline\indent 
		Faculty of Mathematics and Computer Science,		\newline\indent 
		Nicolas Copernicus University, \newline\indent ul. Chopina 12/18,		 87-100 Toruñ, Poland
	}
}
\email{\href{mailto:jmederski@impan.pl}{jmederski@impan.pl}}
\address[A. Szulkin]{\newline\indent 
	Department of Mathematics,
	\newline\indent 
	Stockholm University,
	\newline\indent 
	106 91 Stockholm, Sweden
}
\email{\href{mailto:andrzejs@math.su.se}{andrzejs@math.su.se}}
\subjclass[2010]{Primary: 35Q60; Secondary: 35Q61, 35J20}
\keywords{Time-harmonic Maxwell equations, $p$-curl-curl problem, Sobolev-curl inequality, ground state, variational methods, strongly indefinite functional.}
\begin{document}

\begin{abstract} 
We study a Sobolev-type inequality involving the $p$-curl operator in $\mathbb{R}^3$. We prove the existence of a minimizer which yields a solution to the $p$-curl-curl equation in the critical case. The problem is motivated both by nonlinear Maxwell equations and by the occurrence of zero modes in three-dimensional Dirac equations. Moreover, we introduce a new variational approach that allows to treat quasilinear strongly indefinite problems by direct minimization on a Nehari-type constraint. We also consider existence of minimizers under some symmetry assumptions. Finally, our approach offers a new proof of the compactness of minimizing sequences for the Sobolev inequalities in the critical case.
\end{abstract}

\maketitle

\section{Introduction} \label{sec:intro}

Sobolev inequalities and embedding theorems play a crucial role in the theory of partial differential equations as well as in mathematical physics. 
We recall the pioneering result by Aubin \cite{Aubin} and Talenti  \cite{Talenti}.
Suppose that $N\geq 2$, $1<p<N$ and let $S_p=S_p(N)$ be the Sobolev constant, i.e. the largest constant such that the inequality
\begin{equation}\label{eq:Sobolevineq}
	\int_{\R^N}|\nabla u|^p\, dx\geq S_p\Big(\int_{\R^N}|u|^{p^*}\,dx\Big)^{\frac{p}{p^*}}
\end{equation}
holds for any $u\in \cD^{1,p}(\R^N)$ where $p^*:=\frac{Np}{N-p}$ and $\cD^{1,p}(\R^N)$ denotes the homogeneous Sobolev space, i.e  the completion of $\cC^{\infty}_0(\R^N)$ with respect to the norm  $|\nabla \cdot|_{p}$. Here and in the sequel $|\cdot|_q$ denotes the $L^q$-norm for $q\in[1,\infty]$.  Equality holds in \eqref{eq:Sobolevineq} if $u$ has the form
\begin{equation}\label{eq:AT}
u(x)=\big(a+b|x|^{\frac{p}{p-1}}\big)^{\frac{p-N}{p}}
\end{equation}
for some constants $a,b>0$ and the formula for $S_p$ is given in \cite{Talenti}.  Moreover, recently  Damascelli et al. \cite{SciunziAiM2014} and Sciunzi \cite{SciunziAiM2016} showed  that \eqref{eq:AT} constitute unique positive solutions to the problem\\
\begin{equation}\label{eq:plap}
-\div(|\nabla u|^{p-2}\nabla u)=|u|^{p^*-2}u\hbox{ in }\R^N,
\end{equation}
up to translations in $\R^N$ and under some additional relation between $a$ and $b$.

Throughout the rest of the paper except for Section \ref{sec:Lions} we assume $N=3$.  Motivated
 by the classical inequality \eqref{eq:Sobolevineq} and by physical considerations which we shall discuss later, we want to analyse an analogous inequality  with  $\nabla u$ replaced by $\curlop u$, the curl of a vector field $u:\R^3\to\R^3$. The obvious candidate $S$ for a corresponding inequality, the largest possible constant such that 
$$
\int_{\r3}|\curlop u|^p\, dx\geq S\Big(\int_{\r3}|u|^{p^*}\,dx\Big)^{\frac{3-p}{3}}
$$ 
holds for $u\in\cDp$, is certainly not suitable because it equals $0$. Indeed, $\curlop u $ has a  nontrivial (in fact an infinite-dimensional) kernel: $\curlop (\nabla\vp)=0$ for all $\vp\in\cC_0^{\infty}(\R^3)$. 

Throughout the paper $\curlop u$ as well as the divergence of $u$, $\div u$, should be understood in the distributional sense.

In order to properly define a Sobolev-type inequality we first introduce the Banach space 
$$
\Wc := \big\{u\in L^{p^*}(\R^3,\R^3): \curlop u\in L^p(\R^3,\R^3)\big\}
$$
which we endow with the norm 
$$
\|u\|:= \left(|u|^2_{p^*}+|\curlop u|^2_p\right)^{1/2}.
$$
 It is standard to show that
$\Wc$ is the closure of $\cC^{\infty}_0(\R^3,\R^3)$ with respect to $\|\cdot\|$, see Lemma \ref{density}.

Denote the kernel of $\curlop (\cdot )$ in $\Wc$  by
$$\cW:=\big\{w\in \Wc: \curlop w=0\big\},$$
and let $S_{p,\curl}$ be the largest possible constant such that the {\em Sobolev-curl inequality}
\begin{equation}\label{eq:neq}
\int_{\r3}|\curlop u|^p\, dx\geq S_{p,\curl}\inf_{w\in \cW}\Big(\int_{\r3}|u+w|^{p^*}\,dx\Big)^{\frac{3-p}{3}}
\end{equation}
holds for any $u\in \Wc\setminus \W$. Inequality \eqref{eq:neq} is in fact trivially satisfied also for $u\in \cW$ because then both sides are zero.

The inequality \eqref{eq:neq} has been introduced for the first time by  the authors in \cite{MedSz} in the case $p=2$, in the context of nonlinear Maxwell equations and the quintic effect of the medium, see also \cite{Mederski,MederskiJFA2018} and the references therein. One of the main results of \cite{MedSz} states that $S_{2,\curl}>S_2$ and equality holds in \eqref{eq:neq} for $u\in  \cD^{1,2}(\curl;\R^3)\setminus \W$ which is a ground state solution to the critical curl-curl problem 
\begin{equation} \label{eq10}
\curlop (\curlop u)= |u|^{4}u.
\end{equation}
It has also been observed that $u$ cannot be radially symmetric (i.e. $\SO(3)$-equivariant) and a question concerning symmetry properties of ground states has been left as an open problem. In \cite{GaczkowskiMederskiSchino} it has been recently proved that there are infinitely many  cylindrically symmetric solutions $(u_n)$
of the form
\begin{equation*}
u_n(x)=\frac{v_n(x)}{\sqrt{x_1^2+x_2^2}}\Bigl(\begin{smallmatrix}
-x_2\\
x_1\\
0
\end{smallmatrix}\Bigr),\quad x=(x_1,x_2,x_3)\in\R^3\setminus(\{0\}\times\{0\}\times\R),
\end{equation*}
where $v_n:\R^3\to\R$ is a cylindrically symmetric function, i.e. it is invariant with respect to the action of $\SO(2)\times\{1\}\subset \SO(3)$. We emphasize the physical relevance of analyzing \eqref{eq10}. Solutions to this problem correspond to time-harmonic fields 
$E(x,t)=u(x)\cos(\omega t)$ which solve the full nonlinear electromagnetic wave equation when combined with Maxwell's laws and material relations \cite{Agrawal,BartschMederski1,Stuart:1993}.
Due to the complexity of the exact model, various approximations are commonly used which lead to a (nonlinear) Schr\"odinger equation. They often involve neglecting the term $\nabla (\div(u))$ in the identity $\curlop (\curlop u)=\nabla (\div(u))-\Delta u$ (the so-called scalar approximation), or applying the slowly varying envelope approximation. However, such simplifications can yield {\em non-physical} results; see \cite{Akhmediev-etal, Ciattoni-etal:2005}. The case $p=3/2$, which has a very different physical background, has been considered in \cite{FrankLoss2}. See the comments below.

Our main aim in this work is to extend the analysis of \cite{FrankLoss2, MedSz} to a general $p$ satisfying $1<p<3$ and find solutions to  the {\em $p$-curl-curl problem} with critical exponent
\begin{equation}
\label{eq}
\curlop (|\curlop u|^{p-2}\curlop u)= |u|^{p^*-2}u\quad \hbox{in }\R^3.
\end{equation}
Note that in general for divergence-free $u$, $|\curlop u|_p\neq |\nabla u|_p$ unless $p=2$ which complicates some arguments as we shall see later. In the quasilinear case $p\neq 2$ the strongly indefinite structure of 
\eqref{eq}  is more difficult to treat and we have to take into account the nonlinear nature of the $p$-curl-curl operator.

We want to emphasize that the case $p=3/2$ has a special and very important significance in the theory of zero modes of the three-dimensional Dirac equations. Namely, in view of the results of Fr\"ohlich, Lieb, Loss and Yau \cite{FrolichLiebLoss,LossYau}, the existence of zero modes is related to the stability of the hydrogen atom, i.e. finiteness of its ground state energy. Recall that
 a {\em zero mode} is a  nontrivial solution $\psi:\R^3\to\C^2$ to the spinor equation
\begin{equation}\label{eq:zeromode}
\sigma \cdot (-i\nabla -u)\psi =0
\end{equation}
where $\sigma$ stands for the vector of Pauli matrices
$$\sigma_1=\begin{pmatrix}
0& 1\\
1 & 0
\end{pmatrix},\; \sigma_2=\begin{pmatrix}
0& -i\\
i & 0
\end{pmatrix},\;\sigma_3=\begin{pmatrix}
1& 0\\
0 & -1
\end{pmatrix} $$
and $u$ is a vector potential for the magnetic field $\curlop u$. Loss and Yau \cite{LossYau} showed that if
\begin{eqnarray}
u(x)&:=&3(1+|x|^2)^{-2}\big((1-|x|^2)w+2(w\cdot x)x+2w\times x)\big) \label{defu}
\end{eqnarray}
where $w\ne 0$ is a constant vector, then  \eqref{eq:zeromode} is satisfied for an explicitly given $\psi$ and properly chosen $w$, see \cite[Section II]{LossYau} for the details.

Very recently Frank and Loss \cite{FrankLoss1,FrankLoss2,FrankLoss3} have considered a necessary condition for the existence of a zero mode which they express in terms of $S_{3/2,\curl}$, and this constant plays a crucial role in their analysis. We would like to underline that in their arguments conformal invariance (which is present only if $p=3/2$) has been used in order to prove a nonlinear variant of the Rellich-Kondrachov theorem and to deal with the lack of compactness of the problem. In \cite{FrankLoss2} it has also been noted that $u$ given by \eqref{defu} satisfies the Euler-Lagrange equation associated with $S_{3/2,\curl}$. We shall show in Theorem \ref{Th:main2} that in fact this $u$ satisfies \eqref{eq} with $p=3/2$ if $|w|=4/3$.

The aim of this work is to analyze the problem in the full range $1 < p < 3$ and to provide new results also in the cases $p = 2$ and $p = 3/2$. We present a new variational approach that allows to treat quasilinear strongly indefinite problems by a direct minimization on a Nehari-type constraint as will be seen in the proof of Theorem \ref{Th:main1}.
We show that $S_{p,\curl}$ is attained and  that any optimal vector field $u$ for $S_{p,\curl}$ is (up to rescaling) a ground state solution to~\eqref{eq}. 

Let us define the {\em energy} functional associated with \eqref{eq} by setting
\begin{equation}\label{eq:action}
	J(u):=\frac1p\int_{\R^3} |\curlop u|^p\,dx- \frac1{p^*}\int_{\R^3} |u|^{p^*}\, dx,\quad u\in \Wc,
	\end{equation}
and  introduce a Nehari-type constraint
\begin{equation}\label{def:Neh}
	\cN:=\Big\{u\in \Wc\setminus\cW: \int_{\R^3}|\curlop  u|^p=\int_{\R^3}|u|^{p^*}\, dx\hbox{ and }\div(|u|^{p^*-2}u)=0\Big\}.
	\end{equation}
We also introduce the following constant $\cH_p$ which is largest positive with the property that the inequality
\begin{equation}\label{eq:neqH_p}
	\int_{\r3}|\curlop v|^p\, dx\geq \cH_p \int_{\r3}|\nabla v|^p\, dx
\end{equation}
holds for any
$$v\in \cV:=\left\{v\in \Wc: \div(v) =0\right\}.$$
Clearly, $\cH_2=1$ since $\curlop (\curlop v)=-\Delta v$ as $\div(v)=0$, and hence $|\curlop v|_2=|\nabla v|_2$. It is not a priori clear that $\cH_p>0$. In Corollary \ref{corhp} we shall show that in fact $\cH_p\in (0,2^{p/2}]$.

Our first main result reads as follows.

\begin{Th}\label{Th:main1}
(a) $S_{p,\curl} > S_p\cdot\cH_p$. \\
(b)  If $(u_n)\subset \cN$ is a minimizing sequence for $J$, then there are $(s_n)\subset (0,\infty)$ and $(y_n)\subset \R^3$ such that, passing to a subsequence, 
$$
s_n^{3/p^*}u_n(s_n\cdot+y_n)\to u
$$ 
where $u$ is a minimizer for $J$ on $\cN$.\\
(c) $\inf_{\cN}J=\frac13S_{p,\curl}^{3/p}$ is attained,  $u$ is a ground state solution to \eqref{eq} and equality holds in \eqref{eq:neq} for this $u$.  If $u$ satisfies equality in \eqref{eq:neq}, then there are unique $t>0$ and $w\in\cW$ such that $t(u+w)\in\cN$ and $J(t(u+w))=\inf_{\cN} J$.
\end{Th}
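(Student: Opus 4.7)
\emph{Part (a).} My plan is to use a Helmholtz-type decomposition $u = v + w$ with $v\in\cV$ and $w\in\cW$ (well-defined in $L^{p^*}$ via the Leray projector, which is bounded since $1<p^*<\infty$). Because $\curlop u=\curlop v$ and $v = u+(-w)$ with $-w\in\cW$, one has $|v|_{p^*}\ge\inf_{w'\in\cW}|u+w'|_{p^*}$. Chaining \eqref{eq:neqH_p} applied to $v$, the vector Sobolev inequality $\int|\nabla v|^p\ge S_p|v|_{p^*}^p$ (which holds with the scalar constant via Kato's inequality $|\nabla|v||\le|\nabla v|$ applied to scalar Sobolev for $|v|$), and the preceding lower bound gives $S_{p,\curl}\ge S_p\cH_p$. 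For strictness I would argue by rigidity: equality in Kato forces $v=\phi\eta$ with $\eta$ a constant unit vector, and equality in Sobolev then forces $\phi$ to be Aubin--Talenti, hence radial. But $\div(\phi\eta)=\phi'(|x|)\,\eta\cdot x/|x|$ vanishes only when $\eta=0$ or $\phi\equiv 0$; either case contradicts $u\notin\cW$. Hence the inequality is strict.

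\emph{Computation of $\inf_\cN J$.} On $\cN$ the identity $\tfrac1p-\tfrac1{p^*}=\tfrac13$ gives $J(u)=\tfrac13|u|_{p^*}^{p^*}$. The condition $\div(|u|^{p^*-2}u)=0$ is exactly the Euler--Lagrange equation for the strictly convex functional $w\mapsto|u+w|_{p^*}^{p^*}$ on $\cW$, so $|u|_{p^*}=\inf_{w\in\cW}|u+w|_{p^*}$. Combining this with \eqref{eq:neq} and the Nehari identity $|\curlop u|_p^p=|u|_{p^*}^{p^*}$ yields $|u|_{p^*}^{p^*}\ge S_{p,\curl}^{3/p}$. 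The reverse direction is obtained by starting from a near-optimizer of \eqref{eq:neq}, replacing it by $u+w_0$ with $w_0\in\cW$ realizing $\inf_{w\in\cW}|u+w|_{p^*}$ (unique by strict convexity), and rescaling by the unique $t>0$ that equalizes $|\curlop(tu)|_p^p$ and $|tu|_{p^*}^{p^*}$; this lands in $\cN$ at $J$-value approaching $\tfrac13 S_{p,\curl}^{3/p}$. Hence $\inf_\cN J=\tfrac13 S_{p,\curl}^{3/p}$, and a minimizer on $\cN$ is precisely an optimizer of \eqref{eq:neq}.

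\emph{Part (b), the main obstacle.} The full problem is invariant under $u\mapsto s^{3/p^*}u(s\cdot+y)$, which preserves both $\cN$ and $J$ (a direct computation using $3p/p^*+p-3=0$). Given a minimizing sequence $(u_n)\subset\cN$, I would exploit this invariance by choosing $s_n,y_n$ via a Levy concentration function so that the rescaled $\tilde u_n:=s_n^{3/p^*}u_n(s_n\cdot+y_n)$ carry a fixed fraction of their $L^{p^*}$-mass in the unit ball. Helmholtz-decomposing $\tilde u_n=\tilde v_n+\tilde w_n$, inequality \eqref{eq:neqH_p} forces $(\tilde v_n)$ bounded in $\cD^{1,p}(\R^3,\R^3)$, so $\tilde v_n\weakto v$ with strong convergence in $L^{p^*}_{\text{loc}}$. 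The curl-version of Lions' concentration-compactness developed in Section~\ref{sec:Lions} is then applied to the critical integrals $|\tilde u_n|^{p^*}$ and $|\curlop\tilde u_n|^p$: vanishing is blocked by the Levy normalization, and dichotomy is excluded by the strict subadditivity of the minimizing level at the critical scaling (this is where the strict inequality of~(a) enters, since it forces bubbles to have a single type of extremal profile). Hence the concentration alternative yields strong $L^{p^*}$-convergence of $\tilde u_n$ to some $u$, and weak lower semicontinuity of $|\curlop\cdot|_p^p$ identifies $u\in\cN$ with $J(u)=\inf_\cN J$.

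\emph{Part (c), closing remarks.} Once the minimizer $u$ is in hand, $\cN$ is a natural constraint in the variational sense: testing the Lagrange-multiplier equation against $u$ kills the Nehari constraint and against any $w\in\cW$ kills the divergence constraint, forcing the multiplier to vanish, so $u$ solves \eqref{eq}. Conversely, every nontrivial solution of \eqref{eq} lies on $\cN$ via the same two tests, so the minimizer is a genuine ground state. Equality in \eqref{eq:neq} for this $u$ is the second-paragraph computation read backwards. For the last assertion: given $u$ attaining equality in \eqref{eq:neq}, strict convexity of $|\,\cdot\,|_{p^*}$ on $u+\cW$ determines a unique $w\in\cW$ minimizing $|u+w|_{p^*}$, and the scalar equation $t^{p^*-p}=|\curlop(u+w)|_p^p/|u+w|_{p^*}^{p^*}$ has a unique positive root $t$, placing $t(u+w)\in\cN$ at the minimizing level. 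The step that I expect to demand by far the most work is the compactness in~(b): because $\cW$ is infinite-dimensional, weak convergence in $\Wc$ conveys essentially no pointwise information, so everything rests on the curl-adapted Lions tools of Section~\ref{sec:Lions} combined with the strict inequality of~(a).
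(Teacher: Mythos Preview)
Your outline for (a) and the computation of $\inf_{\cN}J$ are essentially the paper's arguments (Lemma~\ref{lemmaScurlS}, Lemma~\ref{equiv}), but there is a logical ordering issue you gloss over: the rigidity step ``equality in Kato forces $v=\phi\eta$'' presupposes an actual optimizer exists, which is exactly what (b) provides. The paper accordingly proves (b) first and only then derives the strict inequality in (a); your plan to use (a) inside (b) is circular.

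The real gap is in (b). You propose a Lions concentration--compactness trichotomy, invoking ``the curl-version of Lions' concentration--compactness developed in Section~\ref{sec:Lions}'' and claiming that the strict inequality of (a) rules out dichotomy by subadditivity. Neither ingredient is available. Section~\ref{sec:Lions} treats the \emph{classical} Sobolev inequality, not the curl problem; the only curl-adapted concentration tool in the paper is Theorem~\ref{Th:Concentration}, which gives a.e.\ convergence on $\cM$ but no trichotomy. More importantly, the problem is scale-invariant, so the usual strict-subadditivity mechanism does not apply, and the inequality $S_{p,\curl}>S_p\cdot\cH_p$ compares the curl constant to a \emph{different} quantity --- it says nothing about energy quantization of curl bubbles or about ruling out splitting within the curl problem itself.

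The paper's route to (b) is genuinely different and avoids concentration--compactness altogether. After rescaling via Solimini's lemma (Lemma~\ref{lem:Solimini}) to secure a nontrivial weak limit $u_0$, the heart of the argument consists of two claims proved directly from the minimizing property: Claim~1 establishes $\curlop u_n\to\curlop u_0$ a.e.\ by exploiting the near-saturation of \eqref{eq:neq} through the inequality \eqref{eq:firstspcurl} with carefully chosen test functions $v_{n,m}=\zeta T(v_n-v_m)$, combined with monotonicity estimates \`a la Simon; Claim~2 then shows $u_0$ is a critical point of $J$ (not via Lagrange multipliers on $\cN$, whose $C^1$ structure is never used) by passing to the limit in \eqref{eq:firstspcurl} for arbitrary $z$. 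Fatou's lemma closes the argument. This direct-minimization strategy is the paper's main technical novelty and is what your proposal is missing.
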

 
We emphasize that the proof of this theorem is effected via a direct minimization argument, without making use of a Palais-Smale sequence related to $(u_n)$.

Let 
\begin{equation} \label{defM}
	\cM:=\Big\{u\in \Wc: \div(|u|^{p^*-2}u)=0\Big\}.
\end{equation}
Both $\cM$ and $\cN$ are topological manifolds, see Sections \ref{sec:setting} and \ref{sec:Om=R3}. As a consequence of Theorem \ref{Th:main1}(b) we obtain the following corollary.

\begin{Cor}\label{Cor}
If $(u_n)\subset \cM$ is a minimizing sequence for $S_{p,\curl}$ such that $|u_n|_{p^*}=1$, then there are $(s_n)\subset (0,\infty)$ and $(y_n)\subset \R^3$ such that, passing to a subsequence, 
$$\big(s_n^{3/p^*}u_n(s_n\cdot+y_n)\big)$$ is convergent to a minimizer for $S_{p,\curl}$.
\end{Cor}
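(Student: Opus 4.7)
The plan is to reduce the claim to Theorem \ref{Th:main1}(b) by multiplying each $u_n$ by a suitable scalar $t_n$ so that $t_nu_n\in\cN$, and then transferring the conclusion back. A preparatory remark: for any $u\in\cM$, the condition $\div(|u|^{p^*-2}u)=0$ is the Euler--Lagrange equation for the convex functional $w\mapsto\int_{\r3}|u+w|^{p^*}\,dx$ on $\cW$, so $w=0$ is its minimizer and $\inf_{w\in\cW}|u+w|_{p^*}=|u|_{p^*}$. Hence the hypothesis that $(u_n)\subset\cM$ with $|u_n|_{p^*}=1$ is minimizing for $S_{p,\curl}$ is equivalent to $\int_{\r3}|\curlop u_n|^p\,dx\to S_{p,\curl}$.

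Set $t_n:=\bigl(\int_{\r3}|\curlop u_n|^p\,dx\bigr)^{1/(p^*-p)}$ and $v_n:=t_nu_n$. The constraint defining $\cM$ is preserved by scalar multiplication, and the choice of $t_n$ gives $\int_{\r3}|\curlop v_n|^p\,dx=\int_{\r3}|v_n|^{p^*}\,dx$, so $v_n\in\cN$. Since $J(u)=\tfrac13\int_{\r3}|u|^{p^*}\,dx$ for $u\in\cN$ and $t_n\to t_\infty:=S_{p,\curl}^{1/(p^*-p)}>0$, we obtain
$$
J(v_n)=\tfrac{1}{3}t_n^{p^*}\to\tfrac{1}{3}S_{p,\curl}^{3/p}=\inf_{\cN}J
$$
by Theorem \ref{Th:main1}(c). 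Thus $(v_n)$ is a minimizing sequence for $J$ on $\cN$.

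By Theorem \ref{Th:main1}(b), along a subsequence there exist $(s_n)\subset(0,\infty)$ and $(y_n)\subset\r3$ with $s_n^{3/p^*}v_n(s_n\cdot+y_n)\to v$ in $\Wc$ for some minimizer $v$ of $J|_{\cN}$. Define $\tilde u_n(x):=s_n^{3/p^*}u_n(s_nx+y_n)$; then $t_n\tilde u_n=s_n^{3/p^*}v_n(s_n\cdot+y_n)\to v$, and since $t_n\to t_\infty>0$ we conclude $\tilde u_n\to u_\infty:=v/t_\infty$ in $\Wc$. The transformation $u\mapsto s^{3/p^*}u(s\cdot+y)$ leaves $|u|_{p^*}$, $\int_{\r3}|\curlop u|^p\,dx$ and the set $\cM$ invariant, so $|\tilde u_n|_{p^*}=1$ and $\int_{\r3}|\curlop\tilde u_n|^p\,dx\to S_{p,\curl}$. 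Passing to the limit in $L^{p^*}$ and $L^p$ respectively then gives $u_\infty\in\cM$ with $|u_\infty|_{p^*}=1$ and $\int_{\r3}|\curlop u_\infty|^p\,dx=S_{p,\curl}$, hence $u_\infty$ is a minimizer for $S_{p,\curl}$.

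The substantive input is of course Theorem \ref{Th:main1}(b); the only points of care are the identification of $S_{p,\curl}$ with an infimum over $\{u\in\cM:|u|_{p^*}=1\}$ via the convex-optimization remark above, and the stability of the constraint $\cM$ under strong convergence in $L^{p^*}$ (which uses continuity of $u\mapsto|u|^{p^*-2}u$ from $L^{p^*}$ to $L^{p^*/(p^*-1)}$ so that the distributional condition passes to the limit). Given these, the argument is a uniform rescaling and a direct application of Theorem \ref{Th:main1}(b).
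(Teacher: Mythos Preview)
Your argument is correct and is precisely the intended derivation: the paper does not give a separate proof of the corollary but simply states it ``as a consequence of Theorem~\ref{Th:main1}(b)'', and your rescaling $v_n=t_nu_n$ with $t_n=\bigl(\int|\curlop u_n|^p\bigr)^{1/(p^*-p)}$ to land in $\cN$, followed by an application of Theorem~\ref{Th:main1}(b) and division by $t_\infty$, is exactly how one fills in that implication. The only small point worth making explicit is that $u_n\in\cM$ with $|u_n|_{p^*}=1$ forces $u_n\notin\cW$ (since $\cM\cap\cW=\{0\}$ by \eqref{eq:eqf1}), so $\curlop u_n\ne0$ and $t_n>0$ for every $n$; you use this implicitly but it is immediate.
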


Parts (a) and (c) of Theorem \ref{Th:main1} have been obtained in \cite{MedSz} for $p=2$ whereas Corollary \ref{Cor} constitutes the main result of \cite{FrankLoss2} for $p=3/2$.

A natural question arises whether ground states have some symmetry properties. It is easy to see that  any $\SO(3)$-equivariant vector field is the gradient of a radial function (see e.g. \cite[Lemma 4]{Bartsch}, \cite[Theorem 1.3]{ClappSzulkin}). Therefore any $\SO(3)$-equivariant (weak) solution to \eqref{eq} is trivial, and in particular a ground state cannot be radially symmetric. Instead we consider the subgroup $\cO:=\SO(2)\times\{1\}\subset \SO(3)$ and let $\Wco$ be the subspace of  $\Wc$ consisting of $\cO$-equivariant vector fields. Similarly as above, let $S_{p,\curl}^{\cO}$ be the largest possible constant such that the Sobolev-curl inequality \eqref{eq:neq} holds for any $u\in \Wco\setminus \W$. We obtain a symmetric variant of Theorem \ref{Th:main1}; moreover, we gain some additional insights if $p=3/2$.

\begin{Th}\label{Th:main2}
	(a) $S_{p,\curl}^{\cO}\geq S_{p,\curl}$. Moreover, $S_{3/2,\curl}^{\cO}\leq4\pi$.\\
	(b)  If $(u_n)\subset \cN_\cO:=\cN\cap\Wco$ is a minimizing sequence for $J$, then there are $(s_n)\subset (0,\infty)$ and $(y_n)\subset \{0\}\times\{0\}\times\R$ such that, passing to a subsequence, 
	$$
	s_n^{3/p^*}u_n(s_n\cdot+y_n)\to u
	$$ 
	where $u$ is a minimizer for $J$ on $\cN_\cO$.\\
	(c) $\inf_{\cN_\cO}J=\frac13\big(S_{p,\curl}^{\cO}\big)^{3/p}$ is attained and  $u$ is a solution to \eqref{eq}.
\\
(d)  $u$ of the form \eqref{defu} is a solution to \eqref{eq} with $p=3/2$ provided that $|w|=4/3$. It is $\cO$-equivariant if $w=(0,0, \pm4/3)$. 
\end{Th}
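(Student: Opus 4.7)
For part (a), the inequality $S_{p,\curl}^{\cO}\geq S_{p,\curl}$ is a direct consequence of the inclusion $\Wco\setminus\cW\subset\Wc\setminus\cW$: the Sobolev-curl inequality with constant $S_{p,\curl}$ holds a fortiori for every $\cO$-equivariant test field, so the best $\cO$-restricted constant is at least as large. For the upper bound $S_{3/2,\curl}^{\cO}\leq 4\pi$, the plan is to use the Loss--Yau field \eqref{defu} with $w=(0,0,4/3)$, which is $\cO$-equivariant by part (d), as an admissible competitor and to compute the quotient $|\curlop u|_{3/2}^{3/2}\big/\inf_{w'\in\cW}|u+w'|_3^{3/2}$. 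Because $p=3/2$ is the conformally invariant exponent, both numerator and denominator can be evaluated by the stereographic identification with $\mathbb{S}^3$ used in \cite{FrankLoss2,LossYau}, and the computation produces the value $4\pi$.

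For parts (b) and (c), I would run through the argument of Theorem \ref{Th:main1}(b), (c) inside the $\cO$-invariant space $\Wco$. Given a minimizing sequence $(u_n)\subset\cN_\cO$, the same Nehari-type rescaling $s_n^{3/p^*}u_n(s_n\cdot+y_n)$ should produce a nontrivial weak limit $u\in\Wco$, provided one can choose the translations $y_n$ inside the fixed-point set $\{0\}\times\{0\}\times\R$ of the $\cO$-action. This is the only real modification: the $\cO$-invariance of each $u_n$ implies that its concentration profile is also $\cO$-invariant, so either $y_n$ stays at bounded distance from the $x_3$-axis (in which case I replace it by its orthogonal projection, absorbing the bounded error into the weak limit), or the distance blows up, in which case the $L^{p^*}$-mass is smeared over increasingly large circles and vanishes in every bounded region, contradicting the minimizing property. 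Once the translations are on the axis, the rescaled sequence lies in $\Wco$ and the direct minimization argument used for Theorem \ref{Th:main1} yields $u\in\cN_\cO$ with $J(u)=\frac13(S_{p,\curl}^{\cO})^{3/p}$. Since the $\cO$-action is isometric on $\Wc$ and leaves $J$ invariant, the principle of symmetric criticality promotes $u$ from a critical point of $J|_{\Wco}$ to a full solution of \eqref{eq}.

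For (d), the plan is to substitute \eqref{defu} directly into \eqref{eq} with $p=3/2$. It was already observed in \cite{FrankLoss2} that this $u$ is a critical point of the functional associated with $S_{3/2,\curl}$, so $\curlop(|\curlop u|^{-1/2}\curlop u)=\lambda|u|u$ for a constant $\lambda>0$ depending only on $|w|$. Tracking the homogeneities of the three summands in \eqref{defu} under $w\mapsto tw$ and comparing them against the homogeneities of the two sides of \eqref{eq} identifies $\lambda$ as an explicit power of $|w|$; solving $\lambda=1$ yields $|w|=4/3$. The $\cO$-equivariance when $w=(0,0,\pm 4/3)$ follows term by term from the identities $Rw=w$, $w\cdot(Rx)=w\cdot x$ and $w\times(Rx)=R(w\times x)$ valid for every $R\in\cO$ and every $w$ on the rotation axis.

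The principal obstacle I anticipate is the explicit computation in (a): reducing the quotient to the value $4\pi$ requires carrying out the conformal pullback and evaluating several definite integrals. The axial-localization step in (b) is the key new idea relative to Theorem \ref{Th:main1}, but should be routine once the concentration-compactness framework of the unsymmetric proof is in place; matching the scalar $\lambda$ to $1$ in (d) is a bookkeeping exercise in the homogeneities of the Loss--Yau formula.
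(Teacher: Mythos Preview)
Your plan for (b) and (c) matches the paper exactly: run the proof of Theorem~\ref{Th:main1} inside $\Wco$, replacing Solimini's lemma by its axial version (this is Lemma~\ref{lem:Solimini2} in the paper), and invoke symmetric criticality at the end. Your informal argument for why the translations can be pushed onto the $x_3$-axis is precisely the content of that lemma.

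For (d) and the bound $S_{3/2,\curl}^{\cO}\le 4\pi$, the paper takes a more elementary route than you anticipate, and it sidesteps what you flag as the principal obstacle. For (d), the paper computes directly from \eqref{defu} that $\curlop u = 4(1+|x|^2)^{-1}u$ and $|u|=3|w|(1+|x|^2)^{-1}$; these two identities give $|\curlop u|^{-1/2}\curlop u = \tfrac{2}{\sqrt 3}|w|^{-1/2}u$, and one more curl yields $\curlop(|\curlop u|^{-1/2}\curlop u)=\tfrac{8}{3\sqrt3}|w|^{-3/2}|u|u$, so $|w|=4/3$ drops out. Your homogeneity argument correctly predicts $\lambda\propto|w|^{-3/2}$, but pinning down the proportionality constant still requires one explicit evaluation, so you end up doing this computation anyway. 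For the bound in (a), once (d) is established the Loss--Yau field with $w=(0,0,4/3)$ lies in $\cN_{\cO}$, hence $J(u)=\tfrac13|u|_3^3=\tfrac{64}{3}\int_{\R^3}(1+|x|^2)^{-3}\,dx=\tfrac{16}{3}\pi^2$, and comparing with $\inf_{\cN_\cO}J=\tfrac13(S_{3/2,\curl}^{\cO})^2$ gives $S_{3/2,\curl}^{\cO}\le 4\pi$. No conformal pullback and no separate computation of the curl norm are needed; a single standard integral does the job.
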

It is not clear if $u$ of the form \eqref{defu} with $|w|=4/3$ is a ground state solution  to \eqref{eq} with $p=3/2$, hence we do not know whether a strict inequality holds in the first statement of Theorem \ref{Th:main2}(a).

As we shall see below, we can find new solutions to \eqref{eq} which, even in the case $p=3/2$, are different from  $u$ of the form \eqref{defu}.

In physics and mathematics literature devoted to Maxwell's equations one considers fields of the form
\begin{equation}\label{eq:formulauU}
u(x)=\beta(x)\Bigl(\begin{smallmatrix}
-x_2\\
x_1\\
0
\end{smallmatrix}\Bigr),\quad x=(x_1,x_2,x_3)\in\R^3\setminus(\{0\}\times\{0\}\times\R),
\end{equation}
where $\beta(x)=\beta(r,x_3)$ with $r=|(x_1,x_2)|$. Note that
for such $u$, $\div(\UU)=0$ and $\curlop(\curlop u)=-\Delta u$ for $x\in\R^3\setminus(\{0\}\times\{0\}\times\R)$, and a natural question arises if $S_{p,\curl}$ is attained by $u$ as in \eqref{eq:formulauU}. In \cite{FrankLoss2} it has been conjectured that $S_{3/2,\curl}$ is attained by $u$ of the form \eqref{defu}. 
If this conjecture is true, it would be reasonable to expect that for no $p\in(1,3)$ can $u$ of the form \eqref{eq:formulauU} realize $S_{p,\curl}$.

In a similar way as in \cite[Section 2]{AzzBenDApFor}, if $u\in\Wco$, then there exist $\cO$-equivariant  $u_{\rho}$, $u_{\tau}$, $u_{\zeta}\in\Wco$ such that for every $x\in \R^3\setminus(\{0\}\times\{0\}\times\R)$,
$\UU_{\rho}(x)$ (resp. $\UU_{\tau}(x)$, $\UU_{\zeta}(x)$) is the projection of $\UU(x)$ onto $\span \{(x_1,x_2,0)\}$ (resp. $\span\{(-x_2,x_1,0)\}$, $\span\{(0,0,1)\}$).
Hence $u\in\Wco$  is of the form \eqref{eq:formulauU} if and only if $u$ is invariant with respect to the action
$$
\cT(\UU)=\cT(\UU_\rho+\UU_\tau+\UU_\zeta):=-\UU_\rho+\UU_\tau-\UU_\zeta.
$$
Then 
\begin{eqnarray*}
	\Wco^{\cT}&:=&\big\{u\in \Wco: \cT(u)=u\big\}.
\end{eqnarray*}
In a similar way we define 
\begin{eqnarray*}
	\Wco^{\cS}&:=&\big\{u\in \Wco: \cS(u):=-\cT(u)=u\big\}
\end{eqnarray*}
which consists of vector fields of the form 
\begin{equation}\label{eq:formulauU2}
u(x)=\alpha(x)\Bigl(\begin{smallmatrix}
x_1\\
x_2\\
0
\end{smallmatrix}\Bigr)+\gamma(x)\Bigl(\begin{smallmatrix}
0\\
0\\
1
\end{smallmatrix}\Bigr),\quad x=(x_1,x_2,x_3)\in\R^3\setminus(\{0\}\times\{0\}\times\R)
\end{equation}
where  $\alpha(x)=\alpha(r,x_3)$ and $\gamma(x)=\gamma(r,x_3)$. Again,  in a similar way, let $S_{p,\curl}^{\cT}$ (resp. $S_{p,\curl}^{\cS}$) be the largest possible constant such that the Sobolev-curl inequality \eqref{eq:neq} holds for any $u\in \Wco^{\cT}\setminus \W$ (resp. $u\in \Wco^{\cS}\setminus \W$).  Let $ \cN_\cT:=\cN\cap\big(\Wco\big)^{\cT}$ and $\cN_\cS:=\cN\cap\big(\Wco\big)^{\cS}$. There holds:

\begin{Th}\label{Th:main3}
	(a) $S_{p,\curl}^{\cT}\geq S_{p,\curl}^{\cO}$ and  $S_{p,\curl}^{\cS}\geq S_{p,\curl}^{\cO}$.\\
	(b)  If $(u_n)\subset \cN_\cT$ (resp. $(u_n)\subset \cN_\cS$) is a minimizing sequence for $J$, then there are $(s_n)\subset (0,\infty)$ and $(y_n)\subset \{0\}\times\{0\}\times\R$ such that, passing to a subsequence, 
	$$
	s_n^{3/p^*}u_n(s_n\cdot+y_n)\to u
	$$ 
	where $u$ is a minimizer of $J$ on $\cN_\cT$  (resp. $\cN_\cS$).\\
	(c) $\inf_{\cN_\cT}J=\frac13\big(S_{p,\curl}^{\cT}\big)^{3/p}$ and $\inf_{\cN_\cS}J=\frac13\big(S_{p,\curl}^{\cS}\big)^{3/p}$ are attained  and the minimizers are solutions  to \eqref{eq} of the form \eqref{eq:formulauU} and  \eqref{eq:formulauU2}.
\end{Th}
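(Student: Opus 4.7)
The plan is to mirror the proofs of Theorems~\ref{Th:main1} and~\ref{Th:main2} inside the smaller symmetric classes $\Wco^{\cT}$ and $\Wco^{\cS}$. Part~(a) is immediate from the inclusions $\Wco^{\cT},\Wco^{\cS}\subset\Wco$: any constant for which the Sobolev-curl inequality \eqref{eq:neq} holds on $\Wco\setminus\cW$ automatically does so on the two smaller test spaces. The essential preliminary for (b) and (c) is to verify that $\cT$ and $\cS$ act as involutive isometries of $J$ preserving $\cN$. Writing an $\cO$-equivariant field in cylindrical coordinates as $u=f\hat e_r+g\hat e_\theta+h\hat e_z$ with $f,g,h$ depending only on $(r,x_3)$, the cylindrical curl formula
\[
\curlop u=-\partial_z g\,\hat e_r+(\partial_z f-\partial_r h)\hat e_\theta+\tfrac{1}{r}\partial_r(rg)\hat e_z
\]
yields after direct computation $|\curlop(\cT u)|=|\curlop(\cS u)|=|\curlop u|$ and $|\cT u|=|\cS u|=|u|$ pointwise, whence $J\circ\cT=J\circ\cS=J$. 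The divergence-free constraint in $\cN$ is also preserved, since $\div(|\cT u|^{p^*-2}\cT u)=-\div(|u|^{p^*-2}u)$ and $\div(|\cS u|^{p^*-2}\cS u)=\div(|u|^{p^*-2}u)$; in both cases $\cT(\cN)=\cS(\cN)=\cN$.

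For parts (b) and (c) I would run the minimization argument of Theorem~\ref{Th:main2}(b)--(c) verbatim inside $\Wco^{\cT}$ (respectively $\Wco^{\cS}$). The rescaling $u\mapsto s^{3/p^*}u(s\cdot+y)$ with $y\in\{0\}\times\{0\}\times\R$ commutes with both the $\cO$-action and with $\cT,\cS$, so it preserves $\cN_{\cT}$ and $\cN_{\cS}$. Given a minimizing sequence $(u_n)\subset\cN_{\cT}$, the $\cO$-equivariant concentration-compactness already developed for Theorem~\ref{Th:main2} yields, after suitable rescaling and axial translation, a strongly convergent subsequence whose limit $u^\infty$ lies in $\cN_{\cT}$ by closedness of $\Wco^{\cT}$ under these symmetry-preserving operations. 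The identity $\inf_{\cN_{\cT}}J=\tfrac{1}{3}(S_{p,\curl}^{\cT})^{3/p}$ then follows as in Theorem~\ref{Th:main1}(c) from the equivalence between the Sobolev-curl constant and the Nehari level, and analogously for $\cS$.

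To conclude that $u^\infty$ solves \eqref{eq}, I would invoke Palais' principle of symmetric criticality for the compact group $G=\cO\times\langle\cT\rangle$ (respectively $\cO\times\langle\cS\rangle$): since $J$ and $\cN$ are $G$-invariant and the fixed-point set in $\Wc$ is precisely $\Wco^{\cT}$ (respectively $\Wco^{\cS}$), any critical point of $J|_{\cN_{\cT}}$ is automatically a critical point of $J|_{\cN}$, hence a weak solution of \eqref{eq} by the Nehari-type argument of Theorem~\ref{Th:main1}. The prescribed forms \eqref{eq:formulauU} and \eqref{eq:formulauU2} are built into the definitions of $\Wco^{\cT}$ and $\Wco^{\cS}$ respectively. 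The main obstacle is the concentration-compactness step, which is the delicate part of Theorem~\ref{Th:main2}(b); here the additional $\cT$- or $\cS$-symmetry adds no analytic difficulty because it is a pointwise linear involution commuting with rescaling and axial translation, so each piece of the profile decomposition automatically inherits the required invariance and no concentration can escape the symmetric class.
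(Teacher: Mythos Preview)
Your proposal is correct and follows the same route as the paper, which simply observes that $J$ is $\cT$- and $\cS$-invariant and that the forms \eqref{eq:formulauU}, \eqref{eq:formulauU2} are preserved under $T_{s,y}$ with $y\in\{0\}\times\{0\}\times\R$, so the proof of Theorems~\ref{Th:main1}--\ref{Th:main2} carries over verbatim. Your explicit cylindrical-coordinate verification of the invariances is a welcome addition; one minor imprecision is that the mechanism in Theorem~\ref{Th:main1} does not pass through ``critical point of $J|_{\cN}$ $\Rightarrow$ solution'' but rather shows directly (Claim~2) that the limit satisfies $J'(u_0)z=0$ for every test field $z$ in the relevant (here: symmetric) space, after which Palais' principle yields $J'(u_0)=0$ on all of $\Wc$.
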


Observe that $u$ given by  \eqref{defu} is not  of the form \eqref{eq:formulauU} or \eqref{eq:formulauU2}, hence in Theorem \ref{Th:main3} we have found two new different  solutions for $p=3/2$.

 Finally, in the last section of the paper we show that our approach by minimization also provides a new proof of the following classical result established by Lions \cite{Lions}. 

\begin{Th}\label{Th:LionsW}
	The Sobolev inequality \eqref{eq:Sobolevineq} has an optimizer 
	$u \in \cD^{1,p}(\R^N)$. 
	Moreover, any minimizing sequence in $\cD^{1,p}(\R^N)$ is, up to translations, dilations and multiplication by constants, 
	relatively compact in $\cD^{1,p}(\R^N)$. If $u$ is an optimizer, then there is a unique $t>0$ such that $u_0:=tu$ is a weak solution to \eqref{eq:plap}.
\end{Th}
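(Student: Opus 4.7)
My plan is to carry over the Nehari-type direct minimization used in the proof of Theorem~\ref{Th:main1} to the simpler Sobolev setting, exploiting that $\nabla$ has trivial kernel on $\cD^{1,p}(\rn)$ so that no analogue of the quotient by $\cW$ is required. First I introduce the energy functional and Nehari constraint
\[
I(u) := \tfrac{1}{p}|\nabla u|_p^p - \tfrac{1}{p^*}|u|_{p^*}^{p^*},
\qquad \cN_S := \{u \in \cD^{1,p}(\rn)\setminus\{0\} : |\nabla u|_p^p = |u|_{p^*}^{p^*}\}.
\]
For every nonzero $u$, $t\mapsto I(tu)$ is strictly concave with unique positive maximum at $t(u) := \bigl(|\nabla u|_p^p/|u|_{p^*}^{p^*}\bigr)^{1/(p^*-p)}$, and $t(u)u\in\cN_S$. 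A direct computation using \eqref{eq:Sobolevineq} then gives $\inf_{\cN_S} I = \tfrac{1}{N} S_p^{N/p}$ and identifies minimizing sequences for $S_p$ (normalized by $|u|_{p^*}=1$) with minimizing sequences for $I$ on $\cN_S$ up to positive scalar rescaling.

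Given such a minimizing sequence $(u_n)\subset\cN_S$, I would use the scale--translation symmetry $u\mapsto s^{(N-p)/p}u(s\,\cdot+y)$, which preserves both $\cN_S$ and $I$. Via the Lions concentration function $Q_n(r) := \sup_{y\in\rn}\int_{B(y,r)}|u_n|^{p^*}\,dx$, I select $s_n>0$ and $y_n\in\rn$ so that the rescaled $\tilde u_n(x) := s_n^{(N-p)/p}u_n(s_n x+y_n)$ has its $L^{p^*}$ mass concentrated at unit scale around the origin, and not at any smaller scale. A standard concentration-compactness argument then gives, along a subsequence, $\tilde u_n \weakto u$ in $\cD^{1,p}(\rn)$ with $u\neq 0$, and by Rellich--Kondrachov $\tilde u_n\to u$ in $L^q_{\text{loc}}(\rn)$ for every $q\in(p,p^*)$ and pointwise a.e.

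To extract strong convergence set $v_n := \tilde u_n - u$. Brezis--Lieb in $L^{p^*}$ yields $|\tilde u_n|_{p^*}^{p^*} = |u|_{p^*}^{p^*} + |v_n|_{p^*}^{p^*} + o(1) \to S_p^{N/p}$, so $\lambda := |u|_{p^*}^{p^*}/S_p^{N/p}$ and $\mu := \lim|v_n|_{p^*}^{p^*}/S_p^{N/p}$ satisfy $\lambda + \mu = 1$ with $\lambda > 0$. Applying \eqref{eq:Sobolevineq} separately to $u$ and to each $v_n$ gives $|\nabla u|_p^p \geq S_p^{N/p}\lambda^{p/p^*}$ and $\liminf|\nabla v_n|_p^p \geq S_p^{N/p}\mu^{p/p^*}$. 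Combining these with the analogous gradient splitting
\[
|\nabla\tilde u_n|_p^p = |\nabla u|_p^p + |\nabla v_n|_p^p + o(1)
\]
and the Nehari identity $|\nabla\tilde u_n|_p^p = |\tilde u_n|_{p^*}^{p^*} \to S_p^{N/p}$ forces $\lambda^{p/p^*} + \mu^{p/p^*} \leq \lambda + \mu = 1$. Since $p/p^* < 1$ and hence $x^{p/p^*} > x$ on $(0,1)$, together with $\lambda>0$ and $\lambda+\mu=1$ this forces $\mu=0$. Hence $\tilde u_n \to u$ in $L^{p^*}(\rn)$, $|u|_{p^*}^{p^*} = S_p^{N/p}$, and $u$ realises equality in \eqref{eq:Sobolevineq}; uniform convexity of $L^p(\rn)$, combined with $\nabla\tilde u_n \weakto \nabla u$ and $|\nabla\tilde u_n|_p \to |\nabla u|_p$, upgrades the gradient convergence to strong convergence in $\cD^{1,p}(\rn)$. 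Since the Nehari constraint is nondegenerate at each $u\in\cN_S$, a Lagrange multiplier argument shows $u$ weakly solves \eqref{eq:plap}; for any optimizer $u$ of $S_p$, the unique $t$ making $u_0 := tu$ a weak solution of \eqref{eq:plap} is precisely $t = t(u)$ above.

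The main obstacle I anticipate is the gradient splitting $|\nabla\tilde u_n|_p^p = |\nabla u|_p^p + |\nabla v_n|_p^p + o(1)$. For $p=2$ it is immediate from Hilbert-space orthogonality of $\nabla u$ and $\nabla v_n$. For $p\neq 2$ it is the Brezis--Lieb lemma applied to $(\nabla\tilde u_n)$, which requires the a.e.\ convergence $\nabla\tilde u_n \to \nabla u$. Because our sequence is only minimizing (not a priori Palais--Smale), this a.e.\ gradient convergence is delicate and is the heart of the argument. I expect it can be obtained either by using the fibering map $u\mapsto t(u)u$ together with an Ekeland-type perturbation to extract a test sequence for the Euler--Lagrange equation \eqref{eq:plap} (and then invoking Boccardo--Murat type compactness for the $p$-Laplace operator), or by refining the concentration-compactness analysis already developed in the curl case to rule out singular concentration of $|\nabla\tilde u_n|^p$ away from the nonvanishing limit.
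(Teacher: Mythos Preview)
Your overall strategy is sound and, if completed along the lines you sketch, would yield a valid proof; but it diverges from the paper's in both steps and, more to the point, sidesteps what the paper advertises as its contribution.

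To obtain a nontrivial weak limit after rescaling, you invoke Lions' concentration function and a ``standard concentration-compactness argument.'' The paper instead applies Solimini's result (the analogue of Lemma~\ref{lem:Solimini}): since $|u_n|_{p^*}$ is bounded away from $0$ on $\cN_S$, some rescaled subsequence has nonzero weak limit, with no trichotomy analysis needed. More importantly, you correctly flag the a.e.\ gradient convergence as the crux, but your proposed fixes --- Ekeland perturbation to a Palais--Smale sequence, then Boccardo--Murat --- are precisely what the paper is designed to avoid. The paper's point is a \emph{direct} argument: from the near-optimality $|\nabla u_n|_p^p - S_p\psi(u_n) < 1/n$ one derives, for every test $z$ and small $\lambda$, a one-sided inequality (the analogue of \eqref{eq:firstspcurl}) that serves as a surrogate Euler--Lagrange relation. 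Taking $z = \zeta T(u_n-u_m)$ (a localized truncated difference) and playing the inequality at $\lambda<0$ against its $n\leftrightarrow m$ counterpart feeds the monotonicity inequalities of the $p$-Laplace operator and gives $\nabla u_n\to\nabla u_0$ a.e.\ \emph{without} any Ekeland or PS step. The same one-sided inequality with general $z$ and $\lambda\to 0^+$ then shows $u_0$ is a critical point, after which Fatou on $|u_n|^{p^*}$ (not Brezis--Lieb on gradients and a dichotomy) closes the argument.

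So your proposal, as it stands, has a genuine gap --- the a.e.\ gradient convergence is not established --- and your suggested remedies would reproduce the classical route rather than the paper's new one. The paper's contribution lies exactly in how this gap is filled directly from minimality.
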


Recall that Lions \cite{Lions} dealt with general sequences converging weakly in $\cD^{1,p}(\R^N)$ and developed a concentration-compactness principle. In the final step this principle was applied to minimizing sequences yielding compactness up to translations and dilations. 
Very recently Dietze and Nam \cite{DNam} found a simplified version of the concentration-compactness principle and applied it in order to get a shorter proof of the above result. 

In our approach we do not use the above concentration-compactness arguments. Instead we project a minimizing sequence on a Nehari-type manifold and apply a different argument based on a result by Solimini \cite{Solimini} which states that {\em a bounded sequence in $\cD^{1,p}(\R^N)$ converges to zero in $L^{p^*}(\R^N)$ if and only if every translation and dilation of it converges weakly to zero in $\cD^{1,p}(\R^N)$}. This allows us to give a new, different proof of Theorem \ref{Th:LionsW}.

The paper is organized as follows. In Section \ref{sec:setting} we introduce a functional setting and perform a general concentration-compactness analysis for this case. We show that the topological manifold $\cM$
is locally compactly embedded in $L^q(\R^3,\R^3)$ for $1\leq q<p^*$ and that if a sequence $(u_n)$ is contained in this manifold and $u_n\rh u$, then $u_n\to u$ a.e. after passing to a subsequence. This result will play an essential role  in the proof of Theorem \ref{Th:main1}. 
Note that $\cD^{1,p}(\curl;\R^3)$ is not locally compactly embedded in $L^q(\R^3,\R^3)$ for any $q$, so the condition $\div(|u|^{p^*-2}u)=0$ is crucial here.
The proof of the first main result is given in Section \ref{sec:Om=R3}.
In Section \ref{sec:symmetry} we find symmetric solutions and prove Theorems \ref{Th:main2} and \ref{Th:main3}, and in  Section \ref{sec:Lions} we give a new proof of Theorem \ref{Th:LionsW} mentioned above.

\section{Functional setting and preliminaries}\label{sec:setting}

Let $\cD^{1,p}(\R^3,\R^3)$ denote the completion of $\cC^{\infty}_0(\R^3,\R^3)$ with respect to the norm $|\nabla \cdot|_p$. In what follows $\lesssim$ and $\gtrsim$ denote the inequalities up to a multiplicative constant.
The following Helmholtz decomposition holds (see \cite{Mederski,MSchSz} for the case $p=2$).

\begin{Lem}\label{defof} $\V$ and $\cW$ are closed subspaces of $\Wc$  and
	\begin{equation}\label{HelmholzDec}
	\Wc=\V\oplus \W
	\end{equation}
where $$\cV:=\Big\{v\in \Wc: \int_{\R^3}\langle v,\nabla \varphi\rangle\,dx=0
\text{ for every $\varphi\in \cC^\infty_0(\R^3)$} \Big\}$$
and
$$
\cW:=\Big\{w\in \Wc: \int_{\R^3}\langle w,\curlop \varphi\rangle\,dx=0
\text{ for every $\varphi\in \cC^\infty_0(\R^3,\R^3)$} \Big\}.
$$
Moreover, $\cV\subset\cD^{1,p}(\r3,\r3)$ and the norms $|\nabla \cdot|_p$ and $|\curlop \cdot|_p$ are equivalent in $\V$. 
\end{Lem}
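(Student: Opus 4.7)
The plan is to prove the four assertions in the order stated, with the Calder\'on--Zygmund theory of Riesz transforms (each $R_j=\partial_j(-\Delta)^{-1/2}$, and hence every composition $R_jR_k$, is bounded on $L^q(\R^3)$ for $1<q<\infty$) providing the central analytic input. That $\V$ and $\W$ are linear subspaces is immediate from linearity of the defining conditions; they are closed because every $u\in \Wc$ lies in $L^{p^*}$, so the maps
\[
u \mapsto \int_{\R^3}\langle u,\nabla\vp\rangle\,dx\quad\text{and}\quad u\mapsto \int_{\R^3}\langle u,\curlop\vp\rangle\,dx
\]
are continuous on $\Wc$ for each fixed test field $\vp$. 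Thus $\V$ (resp.\ $\W$) arises as an intersection of kernels of continuous linear functionals and may equivalently be described as the set of $u\in\Wc$ with vanishing distributional divergence (resp.\ curl).

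To establish the splitting $\Wc=\V\oplus\W$, given $u\in\Wc$ I would define the Helmholtz projection by
\[
w \;:=\; -\nabla(-\Delta)^{-1}\div u,\qquad v \;:=\; u-w,
\]
understood componentwise as $(w)_i=-R_iR_j u_j$; the boundedness of second Riesz transforms on $L^{p^*}$ places $w\in L^{p^*}(\r3,\r3)$. By construction $w$ is a distributional gradient, so $\curlop w=0$ and $w\in\W$, while $\div v=\div u-\Delta(-\Delta)^{-1}\div u=0$ and $\curlop v=\curlop u\in L^p$, so $v\in\V$. For uniqueness, if $z\in\V\cap\W$ then $\div z=\curlop z=0$ in $\cD'(\R^3)$, and the vector identity
\[
-\Delta z \;=\; \curlop(\curlop z)-\nabla(\div z) \;=\; 0
\]
shows that each component of $z$ is harmonic on $\R^3$, hence smooth by elliptic regularity. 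The mean value property combined with $z\in L^{p^*}$ (giving the Liouville estimate $|z(x)|\leq cR^{-3/p^*}|z|_{p^*}\to 0$ as $R\to\infty$) then forces $z\equiv 0$.

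Finally, for $v\in\V$ the identity $-\Delta v=\curlop(\curlop v)$ yields
\[
\nabla v \;=\; -\nabla(-\Delta)^{-1}\curlop(\curlop v),
\]
whose components are again linear combinations of terms of the form $R_iR_j(\curlop v)_k$ and hence controlled in $L^p$ by $\curlop v\in L^p$ via Calder\'on--Zygmund. This gives $|\nabla v|_p\lesssim|\curlop v|_p$, so $\V\subset \cD^{1,p}(\r3,\r3)$ and one direction of the norm equivalence; the reverse inequality $|\curlop v|_p\leq c|\nabla v|_p$ follows from the elementary pointwise bound $|\curlop v|\leq c|\nabla v|$. The main delicacy I anticipate lies in the distributional bookkeeping: since $u\in L^{p^*}$ only gives $\div u$ and $\curlop u$ as distributions a priori, one must verify carefully that the formal operator identities above are consistent when paired with $\cC^\infty_0$ test fields and that the Riesz combinations really realize the correct inverse Laplacian on the relevant $L^{p^*}$ and $L^p$ layers of the space.
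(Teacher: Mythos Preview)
Your proposal is correct and rests on the same analytic engine as the paper (Calder\'on--Zygmund bounds for second Riesz transforms, which is exactly what the paper invokes via \cite{Iwaniec}), but the implementation differs. You work directly with the Leray/Helmholtz projector $w_i=-R_iR_ju_j$ on $L^{p^*}$ and with $\nabla v=-\nabla(-\Delta)^{-1}\curlop(\curlop v)$ on $L^p$; the paper instead approximates $u$ by $u_n\in\cC_0^\infty$, sets $v_n=u_n-\nabla\vp_n$ with $\vp_n$ the Newtonian potential of $\div u_n$, and uses the $L^p$ estimate $|\nabla v_n|_p\le D_p|\curlop u_n|_p$ from \cite{Iwaniec} to show $(v_n)$ is Cauchy in $\cD^{1,p}$ and pass to the limit. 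Your route is shorter and more conceptual, but---as you yourself flag---the identity $\partial_i v_j=-\epsilon_{jkl}R_iR_k(\curlop v)_l$ must be justified at the distributional level (you only know $v\in L^{p^*}$ and $\curlop v\in L^p$ a priori, not $\nabla v\in L^p$). The paper's approximation argument sidesteps this bookkeeping entirely: on smooth compactly supported fields all the operator identities are classical, and closedness of $\cV$, $\cW$ lets one take limits without touching tempered-distribution subtleties. For the trivial intersection $\cV\cap\cW=\{0\}$ the two arguments coincide (harmonic $L^{p^*}$ field vanishes); the paper phrases it via $v=\nabla\xi$ with $\xi$ harmonic, you via $-\Delta z=0$ directly, which is the same thing.
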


Note that in the sense of distributions $\cV = \{v\in\Wc: \div(v)=0\}$ and $\cW = \{w\in\Wc: \curlop w=0\}$

\begin{proof} 
We follow similar arguments as in \cite[Lemma 2.4]{MSchSz} for $p=2$; however, since in general for divergence-free $u$, $|\curlop u|_p\neq |\nabla u|_p$ if $p\neq 2$, we have to modify the proof.
 
 Firstly, we easily check that $\V$ and $\cW$ are closed subspaces of $\Wc$.
	 
Now, take any $u\in \Wc$ and $u_n\in\cC_0^{\infty}(\R^3,\R^3)$ such that $u_n\to u$ in $\Wc$. Let $\vp_n\in\cC^{\infty}(\R^3)$ be the Newtonian potential of $\div(u_n)$, i.e. $\vp_n$ solves $\Delta \vp_n = \div(u_n)$.  Since $u_n\in\cC_0^{\infty}(\R^3,\R^3)$, then by \cite[Proposition 1]{Iwaniec}, $\nabla \vp_n\in L^r(\R^3,\R^3)$ for every $r\in(1,\infty)$. In particular, $\nabla\vp_n\in L^{p^*}(\R^3,\R^3)$. 
Hence
$$	v_n:=u_n-\nabla \vp_n\in  L^{p^*}(\R^3,\R^3).
	$$ 
	Note also that  $\curlop v_n = \curlop u_n$  and $\div(v_n)=0$ pointwise. Let $v_n=(v_n^1,v_n^2,v_n^3)$ and let $e_i$ denote the $i$-th element of the standard basis in $\r3$ (so $v_n^i = \langle v_n,e_i\rangle$). Since
$$-\Delta  v_n=\curlop \curlop v_n,$$
employing the vector calculus identity
\[
\div(A\times B) = \langle \curlop A, B\rangle - \langle A,\curlop B\rangle
\] 
with $A = \curlop v_n$ and $B=e_i$, we obtain
\[
-\Delta v_n^i = \langle \curlop(\curlop v_n), e_i\rangle = \div((\curlop v_n)\times e_i) =: \div(f_n^i), \quad i=1,2,3. 
\]
Note that $\curlop v_n = \curlop u_n\in L^2(\R^3,\R^3)\cap L^p(\R^3,\R^3)$ and
$f_n^i\in L^2(\R^3,\R^3)\cap L^p(\R^3,\R^3)$. Again  by \cite[Proposition 1]{Iwaniec}, 
	$$|\nabla v_n^i|_p\leq C_p|f_n^i|_p \quad\hbox{for }i=1,2,3$$
where $C_p>0$ is a constant.
Hence 
\begin{equation}\label{eqes3C}
|\nabla v_n|_p\leq D_p |\curlop v_n|_p = D_p |\curlop u_n|_p,
\end{equation}
for a constant $D_p>0$ depending on $C_p$.
Similarly we obtain that for $m,n\ge 1$,
	$$|\nabla(v_n-v_m)|_{p}\leq D_p|\curlop(v_n-v_m)|_{p}=D_p|\curlop(u_n-u_m)|_{p}\leq D_p \|u_n-u_m\|.$$
	Thus $(v_n)$ is a Cauchy sequence in $\cDp$. Let $v:=\lim_{n\to\infty}v_n$ in $\cDp$. Then
	$$\int_{\R^3}\langle v,\nabla \vp\rangle\,dx=\lim_{n\to\infty}\int_{\R^3}\langle v_n,\nabla \vp\rangle\,dx=0$$
	for any $\vp\in\cC_0^{\infty}(\R^3)$, hence $\div(v) =0$ and $v\in\cV$.
	Moreover, as $|\curlop v|^2 \le 2|\nabla v|^2$, we have 
	\begin{equation} \label{ineq5}
	|\curlop(v_n-v)|_{p}\leq 2^{1/2}|\nabla(v_n-v)|_{p}\to 0,
	\end{equation}
	so $v_n\to v$ in  $\Wc$ and
	$\nabla \vp_n=u_n-v_n\to u-v$ in $\Wc$. Since $\nabla\vp_n\in\cW$ and $\cW$ is closed,
	$u-v\in \cW$ and we get the decomposition
	$$u=v+(u-v)\in \cV+\cW.$$
	\indent Now take $v\in \cV\cap\W$ and since $\curlop v =0$,
	 by \cite[Lemma 1.1(i)]{Le}, $v=\nabla\xi$ for some $\xi\in W^{1,{p^*}}_{loc}(\R^3)$. Since $\div(v) = 0$, $\xi$ is harmonic and so is $v$. Since $v\in L^{p^*}(\R^3,\R^3)$, by the mean-value formula we infer that $v=0$, so \eqref{HelmholzDec} holds.

From \eqref{eqes3C} with $v$ replacing $v_n$ and \eqref{ineq5} with $v$ replacing $v_n-v$ we see that
\begin{equation} \label{doubleineq}
2^{-1/2}|\curlop v|_p \le  |\nabla v|_p\leq D_p |\curlop v|_p,
\end{equation}
i.e. the norms $|\nabla\cdot|_p$ and $|\curlop\cdot|_p$ are equivalent in $\cV$.
\end{proof}

\begin{Cor} \label{corhp}
$\cH_p\in(0,2^{p/2}]$ where $\cH_p$ is the constant introduced in \eqref{eq:neqH_p}.
\end{Cor}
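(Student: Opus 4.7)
The plan is simply to read off both assertions from the double inequality \eqref{doubleineq} established at the end of the proof of Lemma \ref{defof}, namely $2^{-1/2}|\curlop v|_p \le |\nabla v|_p \le D_p|\curlop v|_p$ for every $v\in\cV$.

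First I would treat the lower bound $\cH_p>0$. Raising $|\nabla v|_p \le D_p|\curlop v|_p$ to the $p$-th power gives
$$\int_{\r3}|\curlop v|^p\,dx \ge D_p^{-p}\int_{\r3}|\nabla v|^p\,dx$$
for every $v\in\cV$, so directly from the definition of $\cH_p$ in \eqref{eq:neqH_p} one obtains $\cH_p \ge D_p^{-p} > 0$.

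Next I would handle the upper bound $\cH_p\le 2^{p/2}$. The left half of \eqref{doubleineq} gives $|\curlop v|_p \le 2^{1/2}|\nabla v|_p$ for every $v\in\cV$; raising to the $p$-th power produces $\int_{\r3}|\curlop v|^p\,dx \le 2^{p/2}\int_{\r3}|\nabla v|^p\,dx$. If instead $\cH_p > 2^{p/2}$, then choosing any $v\in\cV$ with $|\nabla v|_p>0$ (which exists, e.g.\ the $\cV$-component under the Helmholtz decomposition \eqref{HelmholzDec} of any smooth compactly supported vector field that is not curl-free) would yield
$$\cH_p\int_{\r3}|\nabla v|^p\,dx \le \int_{\r3}|\curlop v|^p\,dx \le 2^{p/2}\int_{\r3}|\nabla v|^p\,dx,$$
a contradiction.

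Since the corollary is a direct consequence of \eqref{doubleineq}, there is no genuine obstacle to overcome; the substantive work has already been done in establishing the equivalence of the norms $|\nabla\cdot|_p$ and $|\curlop\cdot|_p$ on $\cV$ via the Iwaniec estimate inside Lemma \ref{defof}. The only minor verification needed is that $\cV$ contains elements with nonvanishing gradient, which is immediate from the Helmholtz decomposition.
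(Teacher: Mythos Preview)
Your proof is correct and essentially identical to the paper's: both derive $\cH_p>0$ and $\cH_p\le 2^{p/2}$ directly from the two halves of the double inequality \eqref{doubleineq}, with the paper chaining $|\curlop v|_p \ge \cH_p^{1/p}|\nabla v|_p \ge \cH_p^{1/p}\,2^{-1/2}|\curlop v|_p$ while you phrase the upper bound as a contradiction. The explicit remark that $\cV$ contains a nonzero element is a small point you make more carefully than the paper does.
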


\begin{proof}
According to \eqref{doubleineq}, $\cH_p^{1/p} \ge D_p^{-1}>0$ and
\[
|\curlop v|_p \ge \cH_p^{1/p}|\nabla v|_p \ge \cH_p^{1/p}\,2^{-1/2}|\curlop v|_p.
\]
Hence $\cH_p\in(0,2^{p/2}]$.
\end{proof}

In Theorem \ref{Th:Concentration} below we formulate a concentration-compactness-type result in $\cM$.
By convexity, for any $v\in\cV$ we find a unique $w(v)\in \cW$ such that
\begin{equation}\label{eq:ineqF}
	\int_{\R^3}|v+w(v)|^{p^*}\,dx \le \int_{\R^3}|v+ w|^{p^*}\,dx \quad \text{for all } w\in \cW.
\end{equation}
This implies that
\begin{equation} \label{eq:eqf}
	\int_{\R^3}\langle |v+w|^{p^*-2}(v+w),\zeta\rangle\,dx  =0 \quad \text{for all } \zeta\in\cW \text{ if and only if }  w=w(v).
\end{equation}
Since in particular we can take $\zeta = \nabla\vp$ with $\vp\in \cC_0^\infty(\r3)$, equivalently we have 
\begin{equation} \label{eq:eqf1}
v+w\in\cM \text{ where } v\in\cV,\ w\in\cW \quad  \text{if and only if} \ w=w(v)
\end{equation}
(recall the definition \eqref{defM} of $\cM$).

\begin{Lem} \label{convw}
The mapping $v\mapsto w(v)$ is continuous from $\cV$ to $\cW$ in the $L^{p^*}$-topology, hence also in $\Wc$. In particular, $\cM$ is a topological manifold.
\end{Lem}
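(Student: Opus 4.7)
The first observation I would make is that on $\cW$ the norm $\|\cdot\|$ reduces to $|\cdot|_{p^*}$, since $\curlop w=0$ for $w\in\cW$. Combined with the closedness of $\cW$ in $\Wc$ from Lemma~\ref{defof}, this identifies $\cW$ with a closed subspace of $L^{p^*}(\R^3,\R^3)$, hence weakly closed in $L^{p^*}$ by Mazur's theorem. This is the crucial structural fact: it guarantees that weak $L^{p^*}$-limits of sequences from $\cW$ remain in $\cW$.

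Assume $v_n\to v$ in $L^{p^*}$ with $v_n,v\in\cV$. Testing the minimality~\eqref{eq:ineqF} at $w=0$ gives $|v_n+w(v_n)|_{p^*}\le|v_n|_{p^*}$, so $(w(v_n))$ is bounded in $L^{p^*}$. Passing to a subsequence, $w(v_n)\rh w_*$ weakly in $L^{p^*}$, and $w_*\in\cW$ by the observation above. For every fixed $w\in\cW$, combining \eqref{eq:ineqF} with weak lower semicontinuity of $|\cdot|_{p^*}$ and the strong convergence $v_n\to v$ yields
\begin{equation*}
\int_{\R^3}|v+w_*|^{p^*}\,dx\le\liminf_n\int_{\R^3}|v_n+w(v_n)|^{p^*}\,dx\le\lim_n\int_{\R^3}|v_n+w|^{p^*}\,dx=\int_{\R^3}|v+w|^{p^*}\,dx.
\end{equation*}
Uniqueness of the minimizer of \eqref{eq:ineqF} forces $w_*=w(v)$, and since the subsequence was arbitrary, $w(v_n)\rh w(v)$ in $L^{p^*}$ along the full sequence.

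To upgrade to strong convergence I would invoke the Radon--Riesz property of $L^{p^*}$, which holds because the hypothesis $1<p<3$ ensures $1<p^*<\infty$ and hence uniform convexity. Taking $w=w(v)$ in \eqref{eq:ineqF} and passing to the limit gives $\limsup_n|v_n+w(v_n)|_{p^*}\le|v+w(v)|_{p^*}$, so combined with weak lower semicontinuity the norms converge: $|v_n+w(v_n)|_{p^*}\to|v+w(v)|_{p^*}$. Weak convergence together with norm convergence in a uniformly convex space yields strong convergence $v_n+w(v_n)\to v+w(v)$ in $L^{p^*}$; subtracting $v_n\to v$ gives $w(v_n)\to w(v)$ in $L^{p^*}$. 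Because $\|\cdot\|=|\cdot|_{p^*}$ on $\cW$, the same convergence holds in $\Wc$.

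For the manifold assertion, define $\Phi:\cV\to\cM$ by $\Phi(v):=v+w(v)$. Continuity of $\Phi$ is exactly what was proved above. Bijectivity onto $\cM$ follows from the Helmholtz decomposition \eqref{HelmholzDec} combined with the characterization \eqref{eq:eqf1}. The inverse is the restriction to $\cM$ of the bounded linear projection $\Wc\to\cV$ along $\cW$, continuous by the Open Mapping Theorem applied to the splitting in Lemma~\ref{defof}. Thus $\Phi$ is a global homeomorphism between the Banach space $\cV$ and $\cM$, so $\cM$ is a topological manifold. I expect the main obstacle to be the weak-to-strong upgrade, which depends essentially on the uniform convexity of $L^{p^*}$; the remainder is a direct consequence of the Helmholtz decomposition and the variational characterization of $w(v)$.
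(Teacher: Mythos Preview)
Your argument is correct and follows essentially the same route as the paper's proof: boundedness of $(w(v_n))$ from minimality, weak subsequential limit identified as $w(v)$ via the chain of inequalities and uniqueness, then upgrade to strong convergence by norm convergence plus uniform convexity of $L^{p^*}$. Your write-up is in fact more explicit than the paper's---you spell out the Radon--Riesz step and the homeomorphism $\cV\to\cM$ for the manifold assertion, both of which the paper leaves implicit.
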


\begin{proof}
Suppose $v_n,v\in \cV$ and $v_n\to v$ in the $L^{p^*}$-topology. By \eqref{eq:ineqF}, $(w(v_n))$ is bounded, hence $w(v_n)\rh w$ for some $w\in\cW$ after passing to a subsequence. So
\begin{eqnarray*}
\lim_{n\to\infty}\ir3|v_n+w(v_n)|^{p^*}\,dx & \ge & \ir3|v+w|^{p^*}\,dx \ge \ir3|v+w(v)|^{p^*}\,dx \\
& = &  \lim_{n\to\infty}\ir3|v_n+w(v)|^{p^*}\,dx \ge \lim_{n\to\infty}\ir3|v_n+w(v_n)|^{p^*}\,dx,
\end{eqnarray*}
 thus $\lim_{n\to\infty}\ir3|v_n+w(v_n)|^{p^*}\,dx = \ir3|v+w(v)|^{p^*}\,dx$ and $w(v_n)\to w(v)$ in $\cW$.
\end{proof}

Denote the space of finite measures in $\R^3$ by $\cM(\R^3)$.

\begin{Th}(cf. \cite[Theorem 3.1]{MedSz})  \label{Th:Concentration}
	Suppose $v_n+w(v_n)\in\cM$ for $n\geq 1$,  $v_n\weakto v_0$ in $\cV$, $v_n\to v_0$ a.e. in $\R^3$, $|\nabla v_n|^p\weakto \mu$ and $|v_n|^{p^*}\rh \rho$
	 in $\cM(\R^3)$ as $n\to\infty$. Then
	there exist an at most countable set $I\subset\R^3$ and nonnegative weights $\{\mu_x\}_{x\in I}$, $\{\rho_x\}_{x\in I}$ such that
	$$
	\mu\geq |\nabla v_0|^p+\sum_{x\in I}\mu_x\delta_x, \quad \rho = |v_0|^{p^*} + \sum_{x\in I}\rho_x\delta_x,
	$$
	 and passing to a subsequence, $w(v_n)\rh w(v_0)$ in $\cW$, $w(v_n)\to w(v_0)$ a.e. in $\R^3$ and in $L^q_{loc}(\R^3,\R^3)$ for any $1\leq  q<p^*$. In particular, $\cM$ is compactly embedded  in $L^q_{loc}(\R^3,\R^3)$ for any $1\leq  q<p^*$.
\end{Th}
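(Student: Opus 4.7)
The proof has two essentially separate components: the Lions measure-decomposition for the sequence $(v_n)$, and the analysis of the Hodge-corrector $w(v_n)$.

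\emph{Step 1 (Measure decomposition).} By Lemma~\ref{defof}, the norms $|\nabla\cdot|_p$ and $|\curlop\cdot|_p$ are equivalent on $\cV$, so $(v_n)$ is bounded in $\cD^{1,p}(\R^3,\R^3)$, with $v_n\weakto v_0$, $v_n\to v_0$ a.e., $|\nabla v_n|^p\weakto\mu$ and $|v_n|^{p^*}\weakto\rho$. These are precisely the hypotheses of Lions' second concentration--compactness lemma applied componentwise; the conclusion supplies the claimed at most countable set $I$, atoms $\mu_x,\rho_x\ge 0$, and representations $\mu\ge|\nabla v_0|^p+\sum_{x\in I}\mu_x\delta_x$ and $\rho=|v_0|^{p^*}+\sum_{x\in I}\rho_x\delta_x$.

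\emph{Step 2 (Extracting the weak limit of $w(v_n)$).} Testing \eqref{eq:ineqF} at $w=0$ gives $|v_n+w(v_n)|_{p^*}\le|v_n|_{p^*}\le C$, whence $|w(v_n)|_{p^*}\le 2C$; since $\curlop w(v_n)=0$, the sequence $(w(v_n))$ is bounded in $\cW$. After passing to a subsequence, $w(v_n)\weakto w_0\in\cW$. By \eqref{eq:eqf1}, identifying $w_0=w(v_0)$ is equivalent to
$$
\int_{\R^3}\bigl\langle|u_0|^{p^*-2}u_0,\nabla\vp\bigr\rangle\,dx=0\quad\text{for every }\vp\in\cC_0^\infty(\R^3),
$$
where $u_n:=v_n+w(v_n)$ and $u_0:=v_0+w_0$. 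The corresponding identity holds for each $u_n$; the obstruction is that the nonlinearity $u\mapsto|u|^{p^*-2}u$ is not weakly sequentially continuous on $L^{p^*}$.

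\emph{Step 3 (Main obstacle: a.e.\ convergence of $w(v_n)$).} Since $v_n\to v_0$ a.e.\ by hypothesis, it suffices to produce $w(v_n)\to w_0$ a.e. On every ball $B$, $\curlop w(v_n)=0$ together with simple connectedness yields a zero-mean potential $\xi_n\in W^{1,p^*}(B)$ with $\nabla\xi_n=w(v_n)$; pick $\xi_\infty$ analogously for $w_0$. By the defining property of $w(v_n)$, each $\xi_n$ solves the strictly monotone quasilinear equation
$$
-\div\bigl(|v_n+\nabla\xi_n|^{p^*-2}(v_n+\nabla\xi_n)\bigr)=0\quad\text{in }B,
$$
while Rellich--Kondrachov guarantees $v_n\to v_0$ in $L^q(B)$ for every $q<p^*$. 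The real difficulty is that $p^*$ is exactly critical for the principal part, so higher integrability of $\nabla\xi_n$ is not automatic. A Browder--Minty monotonicity argument---testing the difference of the equations for $\xi_n$ and $\xi_\infty$ against a cutoff times the Boccardo--Murat truncation $T_k(\xi_n-\xi_\infty)$, and invoking strict convexity of $\eta\mapsto|v_n(x)+\eta|^{p^*}$---should force $\nabla\xi_n\to\nabla\xi_\infty$ a.e.\ on $B$. A diagonal extraction over an exhaustion of $\R^3$ by balls then yields $w(v_n)\to w_0$ a.e.\ on $\R^3$.

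\emph{Step 4 (Conclusion).} With a.e.\ convergence in hand, $|u_n|^{p^*-2}u_n$ converges a.e.\ to $|u_0|^{p^*-2}u_0$ and is bounded in $L^{p^*/(p^*-1)}$, hence converges weakly there; this allows passage to the limit against $\nabla\vp$ and confirms $w_0=w(v_0)$. The uniform $L^{p^*}$-bound combined with a.e.\ convergence upgrades to strong $L^q_{loc}$-convergence for every $1\le q<p^*$ via Vitali's theorem. The final claim that $\cM$ is compactly embedded in $L^q_{loc}$ is an immediate corollary: any bounded sequence in $\cM$ Helmholtz-decomposes as $u_n=v_n+w(v_n)$ with $(v_n)$ bounded in $\cV$, placing the sequence precisely in the setting covered above.
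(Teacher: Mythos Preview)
Your overall architecture is right and matches the paper's, but Step~3 contains a real gap that the phrase ``should force'' is hiding. The monotonicity/Boccardo--Murat scheme you sketch produces, after testing and rearranging, a term of the form
\[
\int_{B}\zeta\,\big\langle |u_n|^{p^*-2}u_n-|u_0|^{p^*-2}u_0,\ v_n-v_0\big\rangle\,dx,
\]
where $u_n=v_n+\nabla\xi_n$ and $u_0=v_0+\nabla\xi_\infty$ (this is exactly what remains when you split $u_n-u_0$ into its $\cV$ and $\cW$ parts and use the equation plus weak convergence to kill the $\nabla\xi_n-\nabla\xi_\infty$ contribution). H\"older only bounds this by $C\,|v_n-v_0|_{L^{p^*}(\supp\zeta)}$, and Rellich--Kondrachov gives you $v_n\to v_0$ in $L^q_{loc}$ for $q<p^*$ but \emph{not} for $q=p^*$. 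So the term does not go to zero, and the truncation $T_k$ on the $\xi$'s does nothing to help because the obstruction lives entirely in the $v$-component. (There is also a secondary issue: $\xi_\infty$ does not satisfy any equation at this stage since $w_0=w(v_0)$ is what you are trying to prove, but this is easily bypassed by using weak convergence of $w(v_n)-\nabla\xi_\infty$ to~$0$ against the fixed function $|u_0|^{p^*-2}u_0$, as the paper does.)

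The paper closes this gap by feeding Step~1 back into Step~3, which your outline treats as independent. The residual term above is estimated by $\big(\int|\vp|^{p^*}|v_n-v_0|^{p^*}\,dx\big)^{1/p^*}$, whose limit is $\big(\int|\vp|^{p^*}\,d\bar\rho\big)^{1/p^*}$ with $\bar\rho=\rho-|v_0|^{p^*}$. The Lions decomposition says $\bar\rho$ is purely atomic, supported on the countable set $I$; hence for any Borel set $E$ disjoint from $I$ one gets $m_k|\Omega_{n,k}\cap E|\to 0$, where $\Omega_{n,k}=\{|u_n-u_0|\ge 1/k,\ |u_n|\le k,\ |u_0|\le k\}$. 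Excising shrinking open neighbourhoods of $I$ and extracting diagonally then yields $u_n\to u_0$ a.e. In short, the missing idea is that the concentration set $I$ from Step~1 is precisely where the critical-exponent defect of $v_n$ lives, and a.e.\ convergence of $w(v_n)$ is obtained by working on its (full-measure) complement.
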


Although the structure of the proof is the same as in \cite[Theorem 3.1]{MedSz}, for the sake of completeness and since some important adaptation is needed (in particular if $p^*<2$), we provide a detailed argument in Appendix \ref{appendix:concom}.

\section{Proof of Theorem \ref{Th:main1}} \label{sec:Om=R3}

It is clear that a minimizer $w(u)$ in \eqref{eq:ineqF} exists uniquely for any $u\in \Wc$, not only for $u\in \cV$. In view of Lemma \ref{defof}, $u+w(u)=v+w(v)\in \cV\oplus\cW$ for some $v\in\cV$ and therefore
\begin{equation} \label{eqeq}
\inf_{w\in\cW} \ir3 |u+w|^{p^*}\,dx = \ir3|u+w(u)|^{p^*}\,dx = \ir3|v+w(v)|^{p^*}\,dx
\end{equation}
and
\begin{equation} \label{eq:scurl}
S_{p,\curl} = \inf_{\substack{u\in \Wc \\ \curlop u\neq 0}} \frac{|\nabla\times u|_p^p}{|u+w(u)|_{p^*}^p} = \inf_{v\in \cV\setminus\{0\}} \frac{|\curlop v|_p^p}{|v+w(v)|_{p^*}^p}.
\end{equation}

\begin{Lem}\label{lemmaScurlS}
$S_{p,\curl}\geq S_p\cdot\cH_p$.
\end{Lem}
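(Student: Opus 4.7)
The plan is to use the reformulation \eqref{eq:scurl} of $S_{p,\curl}$ as an infimum over $\cV\setminus\{0\}$, and then to chain together three ingredients: the classical Sobolev inequality applied to the (scalar components of the) divergence-free field $v$, the definition of $\cH_p$, and the minimality property that defines $w(v)$. The main obstacle is really just checking that all three can be applied to the same $v$; once that is done, everything composes in one line.

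More precisely, I would start from \eqref{eq:scurl} and fix an arbitrary $v\in\cV\setminus\{0\}$. By Lemma \ref{defof}, $\cV\subset \cD^{1,p}(\R^3,\R^3)$, so each scalar component of $v$ lies in $\cD^{1,p}(\R^3)$ and the Sobolev inequality \eqref{eq:Sobolevineq} applies componentwise. Summing the componentwise Sobolev inequalities (or, equivalently, applying \eqref{eq:Sobolevineq} to $|v|$, using $|\nabla |v||\le|\nabla v|$) yields
\[
|\nabla v|_p^p \;\ge\; S_p\,|v|_{p^*}^p.
\]
Next, since $v\in\cV$ is divergence free, the definition \eqref{eq:neqH_p} of $\cH_p$ gives
\[
|\curlop v|_p^p \;\ge\; \cH_p\,|\nabla v|_p^p \;\ge\; \cH_p\,S_p\,|v|_{p^*}^p.
\]

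Finally, by the defining property \eqref{eq:ineqF} of $w(v)$ (taking $w=0\in\cW$ as a competitor),
\[
|v+w(v)|_{p^*}^{p} \;\le\; |v|_{p^*}^{p}.
\]
Combining these,
\[
\frac{|\curlop v|_p^p}{|v+w(v)|_{p^*}^p} \;\ge\; \frac{\cH_p\,S_p\,|v|_{p^*}^p}{|v|_{p^*}^p} \;=\; S_p\cdot\cH_p.
\]
Taking the infimum over $v\in\cV\setminus\{0\}$ and invoking \eqref{eq:scurl} yields $S_{p,\curl}\ge S_p\cdot\cH_p$, which is the claim. Strictly speaking, one must justify applying the scalar Sobolev inequality to $|v|$; this is standard, as $|v|\in\cD^{1,p}(\R^3)$ with $|\nabla|v||\le|\nabla v|$ a.e. (Kato-type inequality), so no delicate argument is needed.
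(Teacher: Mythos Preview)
Your proof is correct and follows essentially the same route as the paper's: both combine the definition of $\cH_p$, the scalar Sobolev inequality applied to $|v|$ via the Kato-type estimate $|\nabla|v||\le|\nabla v|$, and the minimality property $|v+w(v)|_{p^*}\le |v|_{p^*}$. The only difference is cosmetic: the paper fixes an $\varepsilon$-near-minimizer first and lets $\varepsilon\to 0$, while you bound the quotient for arbitrary $v\in\cV\setminus\{0\}$ and take the infimum at the end. One small caution: ``summing the componentwise Sobolev inequalities'' is \emph{not} actually equivalent to the Kato route, since $\sum_i|\nabla v_i|_p^p$ and $\sum_i|v_i|_{p^*}^p$ do not in general compare correctly with $|\nabla v|_p^p$ and $|v|_{p^*}^p$; but your parenthetical alternative (apply \eqref{eq:Sobolevineq} to $|v|$ with $|\nabla|v||\le|\nabla v|$) is exactly what is needed, and is also what the paper does.
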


\begin{proof}
Given $\eps>0$, by \eqref{eq:scurl} and the definition \eqref{eq:neqH_p} of $\cH_p$ we can find $v\in\cV\setminus\{0\}$ such that
\begin{equation} \label{eq:5}
\cH_p\int_{\R^3}|\nabla v|^p\, dx\leq \int_{\R^3}|\curlop v|^p\, dx\le (S_{p, \curl}+\eps)\Big( \int_{\R^3}|v+w(v)|^{p^*}\,dx\Big)^{\frac{p}{p^*}}.
\end{equation}
Let $v=(v_1,v_2,v_3)$.  In Appendix \ref{sec:relation} we shall show that $|v| \in \cD^{1,p}(\r3)$ and 
\begin{equation} \label{CauchySchwartz}
\left|\nabla|v|\right| = |v\cdot\nabla v|/|v| \le |\nabla v| \quad\text{a.e. in } \R^3
\end{equation}
 (here the middle term should be understood as 0 if $v(x)=0$ and $v\cdot\nabla v$ should be understood as the vector $v_1\partial_1v+v_2\partial_2v+v_3\partial_3v$). Assuming this, in view of
the Sobolev inequality we have
\begin{eqnarray}\label{eq:SpcurlSpH}
\int_{\R^3}|\nabla v|^p\,dx&\geq& \int_{\R^3}\big|\nabla |v|\big|^p\,dx\geq  S_p\Big(\int_{\R^3}|v|^{p^*}\,dx\Big)^{p/p^*}\\
&\geq & S_p\Big(\int_{\R^3}|v+w(v)|^{p^*}\,dx\Big)^{p/p^*}\nonumber
\end{eqnarray}
and taking into account \eqref{eq:5},  we get
 $S_{p,\curl}+\eps\geq S_p\cdot\cH_p$ for all $\eps>0$ and the conclusion follows. 
\end{proof}

For $s>0$, $y\in\R^3$ and $u:\R^3\to\R^3$ we denote $T_{s,y}(u):= s^{3/p^*}u(s\cdot +y)$. 
The following lemma is a special case of \cite[Theorem 1]{Solimini}, see also \cite[Lemma 5.3]{Tintarev}.

\begin{Lem}\label{lem:Solimini}
Suppose that $(v_n)\subset \cDp$ is bounded. Then $v_n\to 0$ in $L^{p^*}(\R^3,\R^3)$ if and only if $T_{s_n,y_n}(v_n)\weakto 0$ in $\cDp$ for all $(s_n)\subset (0,\infty)$ and $(y_n)\subset \R^3$.
\end{Lem}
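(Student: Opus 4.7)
The plan splits by direction: the forward implication reduces to scaling invariance, while the converse is Solimini's substantive cocompactness statement. For the forward direction, first record that $T_{s,y}$ is an isometry of both $L^{p^*}$ and the homogeneous Sobolev seminorm. The substitution $z=sx+y$ yields
$$|T_{s,y}(v)|_{p^*}^{p^*} = \ir3 s^3|v(sx+y)|^{p^*}\,dx = |v|_{p^*}^{p^*},$$
and, using $3p/p^* = 3-p$,
$$|\nabla T_{s,y}(v)|_p^p = \ir3 s^{3p/p^*+p}|\nabla v(sx+y)|^p\,dx = |\nabla v|_p^p.$$
Consequently, $v_n\to 0$ in $L^{p^*}$ forces $T_{s_n,y_n}(v_n)\to 0$ strongly in $L^{p^*}$ for any $(s_n),(y_n)$. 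The rescaled sequence is bounded in $\cDp$, so every subsequence has a weak $\cDp$-cluster point; the continuous embedding $\cDp\hookrightarrow L^{p^*}$ forces this cluster point to coincide with the strong $L^{p^*}$-limit $0$, whence $T_{s_n,y_n}(v_n)\rh 0$ in $\cDp$.

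For the converse I argue contrapositively. Assume $v_n\not\to 0$ in $L^{p^*}$; after passing to a subsequence $|v_n|_{p^*}\geq\delta>0$. The task is to produce $(s_n),(y_n)$ along which $T_{s_n,y_n}(v_n)$ has a nonzero weak $\cDp$-limit. The scale-selection step goes through the Lions concentration function
$$Q_n(\lambda) := \sup_{y\in\R^3}\int_{B(y,\lambda)} |v_n|^{p^*}\,dx,$$
which is continuous in $\lambda$ with $Q_n(0^+) = 0$ and $Q_n(\infty)\geq \delta^{p^*}$. Pick $\lambda_n>0$ with $Q_n(\lambda_n) = \delta^{p^*}/2$ and $y_n$ almost realizing the supremum, and set $\tilde v_n := T_{\lambda_n,y_n}(v_n)$. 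Scaling invariance of $|v|^{p^*}\,dx$ transfers the normalization to
$$\int_{B(0,1)}|\tilde v_n|^{p^*}\,dx\gtrsim \delta^{p^*} \qquad \text{and} \qquad \sup_{y}\int_{B(y,1)}|\tilde v_n|^{p^*}\,dx = \frac{\delta^{p^*}}{2}.$$

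The main obstacle is upgrading this mass normalization into a nonzero weak $\cDp$-limit, since the critical embedding $\cDp\hookrightarrow L^{p^*}_{loc}$ is not compact and concentration of $|\tilde v_n|^{p^*}$ at a single point could a priori destroy the weak limit. This is where Solimini's argument enters. Applying Lions' concentration-compactness principle to the measures $|\tilde v_n|^{p^*}\,dx$ (in the spirit of Theorem \ref{Th:Concentration}) excludes vanishing by the lower mass bound and excludes total concentration at any single point by the sup upper bound $\delta^{p^*}/2$; the resulting profile extraction yields a surviving bubble whose nonzero weak $\cDp$-limit persists along a suitable further rescaling-translation subsequence. This delivers $\tilde v_n\rh u\neq 0$ along a subsequence, contradicting the hypothesis and completing the contrapositive. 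The precise bookkeeping is the technical heart of \cite[Theorem 1]{Solimini}, which we invoke directly.
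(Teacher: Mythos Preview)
The paper does not prove this lemma; it simply records it as a special case of \cite[Theorem~1]{Solimini} (see also \cite[Lemma~5.3]{Tintarev}). Your proposal is consistent with this: you add a correct self-contained argument for the elementary forward implication via the scaling invariance of the $L^{p^*}$ and $\cDp$ norms, and for the converse you sketch the Lions scale-selection setup before ultimately invoking the same citation for the substantive profile-extraction step.
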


Observe that 
$T_{s,y}$ is an isometric isomorphism of $\Wc$ which leaves the functional $J$ and the subspaces $\cV, \cW$ invariant. In particular, $w(T_{s,y}u)=T_{s,y}w(u)$.

By \eqref{def:Neh} and \eqref{defM}, $$\cN = \{u\in \cM\setminus\{0\}: J'(u)u=0\}.$$ 
It can be shown that $\mathcal{N}$ is a topological manifold. We shall not use this fact explicitly here; 
for a proof we refer the reader to \cite[Section~4]{MedSz}. 
Although the case treated there is $p=2$, the argument carries over without essential modifications.

\begin{Lem} \label{equiv}
Suppose $u+w(u)\in \cN$. Then 
\[
\frac{|\nabla\times u|_p^p}{|u+w(u)|_{p^*}^p} = A \quad \text{if and only if} \quad J(u+w(u)) = \frac13 A^{3/p}.
\]
In particular, $\inf_\cN J = \frac13S_{p,\curl}^{3/p}>0$.
\end{Lem}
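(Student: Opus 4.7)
The plan is to exploit the fact that on $\cN$ all three quantities in the statement collapse to a single number, namely $|u+w(u)|_{p^*}$, and then to transfer the identification to the infimum via a scaling argument. First I would use that $w(u)\in\cW$ has zero curl, so $\curlop(u+w(u))=\curlop u$, together with the Nehari identity $|\curlop u|_p^p = |u+w(u)|_{p^*}^{p^*}$ built into the definition \eqref{def:Neh} of $\cN$. Plugging this into $J$ gives
\[
J(u+w(u)) = \Bigl(\frac{1}{p}-\frac{1}{p^*}\Bigr)|\curlop u|_p^p = \frac{1}{3}\,|\curlop u|_p^p,
\]
using $\frac{1}{p}-\frac{1}{p^*}=\frac{1}{3}$ for $p^*=3p/(3-p)$. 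Dividing the same Nehari identity by $|u+w(u)|_{p^*}^p$ shows that
\[
A = \frac{|\curlop u|_p^p}{|u+w(u)|_{p^*}^p} = |u+w(u)|_{p^*}^{p^*-p},
\]
and the elementary identity $3(p^*-p)/p = p^*$ then yields $A^{3/p} = |u+w(u)|_{p^*}^{p^*} = |\curlop u|_p^p = 3\,J(u+w(u))$. This produces both directions of the equivalence simultaneously.

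For the ``in particular'' statement, the plan is to combine this equivalence with \eqref{eq:scurl} via scaling. Given $u\in\Wc\setminus\cW$, the combination $u+w(u)$ lies in $\cM$ by \eqref{eq:eqf1}, and the unique $t>0$ determined by $t^{p^*-p}=|\curlop u|_p^p/|u+w(u)|_{p^*}^{p^*}$ produces $\tilde u:=t(u+w(u))\in\cN$. Uniqueness of the minimizer in \eqref{eq:ineqF} gives $w(tu)=tw(u)$ for every $t>0$, from which one checks at once that the ratio $A$ is invariant under the map $u\mapsto\tilde u$. Hence the set of values of $A$ over $\cN$ coincides with the set over $\Wc\setminus\cW$, so $\inf_{\cN}A = S_{p,\curl}$. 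Combining with the equivalence above and the strict monotonicity of $x\mapsto\frac{1}{3}x^{3/p}$ yields $\inf_{\cN}J = \frac{1}{3}S_{p,\curl}^{3/p}$. Positivity follows from Lemma \ref{lemmaScurlS} and Corollary \ref{corhp}, which together imply $S_{p,\curl}\ge S_p\cH_p>0$.

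The only place where care is needed is in tracking the exponent algebra, in particular the two identities $\frac{1}{p}-\frac{1}{p^*}=\frac{1}{3}$ and $3(p^*-p)/p=p^*$; both are immediate consequences of $p^*=3p/(3-p)$. The scaling step is routine once one observes the homogeneity $w(tu)=tw(u)$, and no substantive obstacle is anticipated.
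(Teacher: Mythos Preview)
Your proof is correct and follows essentially the same approach as the paper: reduce both $A$ and $J$ to powers of $|u+w(u)|_{p^*}$ via the Nehari identity, then match exponents. For the ``in particular'' part the paper simply invokes \eqref{eq:scurl} in one line, whereas you spell out the underlying scaling argument (including $w(tu)=tw(u)$) explicitly; this is the same idea, just more detailed.
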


\begin{proof}
Since $u+w(u)\in \cN$, we have $|\nabla\times u|_p^p = |u+w(u)|_{p^*}^{p^*}$. Hence $\curlop u\ne 0$,  
\[
\frac{|\nabla\times u|_p^p}{|u+w(u)|_{p^*}^p} =  |u+w(u)|_{p^*}^{p^*-p} \quad \text{and} \quad J(u+w(u)) = \frac13|u+w(u)|_{p^*}^{p^*}.
\] 
This gives the first conclusion. The second one follows now from \eqref{eq:scurl}.
\end{proof}

\medskip

\begin{altproof}{Theorem \ref{Th:main1}} We prove part (b) first. 

Let  $(u_n)\subset \cN$ be a minimizing sequence  and write $u_n=v_n+w(v_n)\in\cV\oplus\cW$. Since
\begin{eqnarray} \label{ac}
J(u_n)&=&J(u_n) -\frac1{p^*}J'(u_n)u_n = \Big(\frac1p-\frac{1}{p^*}\Big)|\curlop u_n|^p_p = \frac13|\curlop v_n|^p_p,\\\label{ca}
J(u_n)&=&J(u_n) -\frac1pJ'(u_n)u_n  = \frac13|u_n|^{p^*}_{p^*},
\end{eqnarray}
it follows that $(u_n)$ is bounded.  By Lemma \ref{equiv}, $J(u_n)$ is bounded away from $0$, hence $|u_n|_{p^*}$ and  $|v_n|_{p^*}$ do not converge to $0$.
Therefore, passing to a subsequence and using Lemma \ref{lem:Solimini}, $T_{s_n,y_n}(v_n)\weakto v_0$ in $\cV$ for some $v_0\neq 0$, $(s_n)\subset (0,\infty)$ and $(y_n)\subset \R^3$. Taking subsequences again we also have that $T_{s_n,y_n}(v_n)\to v_0$ a.e. in $\R^3$ and in view of Theorem \ref{Th:Concentration}, $w(T_{s_n,y_n}(v_n))\weakto w(v_0)$ in $\cW$ and $w(T_{s_n,y_n}(v_n))\to w(v_0)$ a.e. in $\R^3$. We set $u_0:=v_0+w(v_0)\neq 0$ and we assume without loss of generality that $s_n=1$ and $y_n=0$. 

Take any $u\in \cM\setminus\{0\}$. Then $w(u)=0$
and hence
\begin{equation}\label{eq:ineqSobM}
\int_{\R^3}|\curlop u|^p\,dx\geq S_{p,\curl}\Big(\int_{\R^3}|u|^{p^*}\,dx\Big)^{\frac{p}{p^*}}
\end{equation}
according to \eqref{eq:scurl}.
Consider the map $\psi:\Wc\setminus\cW\to\R$ given by 
$$\psi(u):=\Big(\int_{\R^3}|u+w(u)|^{p^*}\,dx\Big)^{\frac{p}{p^*}}.$$
As $u_n\in\cN\subset\cm\setminus\{0\}$, we see using \eqref{eq:scurl} that
\begin{equation}\label{eqpsiun}
\psi(u_n)=\Big(\int_{\R^3}|u_n|^{p^*}\,dx\Big)^{\frac{p}{p^*}}=\Big(\int_{\R^3}|\curlop u_n|^{p}\,dx\Big)^{\frac{p}{p^*}}\to S_{p,\curl}^{\frac{p}{p^*-p}}.
\end{equation}
Let $z\in \Wc$.
Since $\psi(u_n)$ is bounded away from zero (and hence $u_n$ is bounded away from $\cW$), we can find $\delta>0$ such that
$$u_n+\lambda z\in \Wc\setminus\cW \hbox{ for all }\lambda\in [-\delta,\delta]\hbox{ and }n\geq 1.$$
Observe that by \eqref{eq:ineqSobM}
$$
\int_{\R^3}|\curlop (u_n+\lambda z)|^p\,dx =\int_{\R^3}|\curlop (u_n+\lambda z+w(u_n+\lambda z))|^p\,dx\geq S_{p,\curl}\, \psi(u_n+\lambda z)
$$
and, passing to a subsequence and using \eqref{eqpsiun}, we may assume
$$\Big|\int_{\R^3}|\curlop u_n|^p\,dx-S_{p,\curl}\,\psi(u_n)\Big|<\frac1n.$$
Hence by the mean value theorem there is $\theta_n=\theta_n(\lambda)\in [0,1]$ such that
\begin{eqnarray}\label{eq:firstspcurl}
&&\quad\int_{\R^3}|\curlop (u_n+\lambda z)|^p\,dx-\int_{\R^3}|\curlop u_n|^p\,dx \geq 
S_{p,\curl} \big(\psi(u_n+\lambda z)-\psi(u_n)\big)-\frac{1}{ n}\\ \nonumber
&& = S_{p,\curl}\,\psi'(u_n+\theta_n\lambda z)\lambda z-\frac1n = S_{p,\curl} \; p\Big(\int_{\R^3}|\wt u_n|^{p^*}\,dx\Big)^{-\frac{p^*-p}{p^*}}\int_{\R^3}|\wt u_n|^{p^*-2}\langle \wt u_n, \lambda z\rangle \,dx-\frac{1}{n}
\end{eqnarray}
where $\wt u_n:=u_n+\theta_n\lambda z+w(u_n+\theta_n\lambda z)$. To show that the derivative $\psi'$ is as claimed above, we put $I(u+w(u)) := \ir3|u+w(u)|^{p^*}\,dx$. Then $\psi(u) = I(u+w(u))^{p/p^*}$. Now we use the fact that the derivative of $I$ evaluated at $z$ is $I'(u+w(u))z$, see the proof of property (ii) on p. 4320 in \cite{BartschMederski2}. Although the setup there is somewhat different, an inspection of the argument shows that the conclusion holds true also in our situation. 

Next we prove two crucial facts which we formulate as claims. Recall that $u_n\to u_0$ a.e. in $\r3$.

\medskip

{\em Claim 1. $\curlop u_n\to\curlop u_0$ a.e. in $\R^3$, up to a subsequence.}\\
Similar results are known for gradients, see e.g. \cite{BoccardoMurat} or \cite{SzulkinWillem}. Below we use some ideas from \cite{SzulkinWillem}.

For $s\in\R$, let $\wt T(s):=s$ if $|s|\le 1$ and $\wt T(s) := \frac s{|s|}$ otherwise. Set
\[
T(u) := (\wt T(u_1),\wt T(u_2),\wt T(u_3)), \quad u=(u_1,u_2,u_3)\in\R^3.
\]
Let $u_n=v_n+w(v_n)\in\cV\oplus\cW$.
Note that $T(v_n-v_m)\in\cD^{1,p}(\r3,\r3)$ for $n,m\ge 1$ (because $v_n-v_m\in\cV\subset\cD^{1,p}(\r3,\r3)$).
Setting $v_{n,m}:=\zeta T(v_n-v_m)$ where $\zeta\in \cC_0^\infty(\r3, [0,1])$ and $\zeta=1$ on some bounded domain $\Omega$, we see that  if $|\lambda|$ is small enough ($|\lambda| < \delta$ for a suitably small $\delta$), then $u_n+\lambda v_{n,m}\in \Wc\setminus\cW$ and by \eqref{eq:firstspcurl} with $z=v_{n,m}$,
\begin{eqnarray*}
&&\int_{\R^3}|\curlop (u_n+\lambda v_{n,m})|^p\,dx - \int_{\R^3}|\curlop u_n|^p\,dx \\
&& \hskip3cm \geq S_{p,\curl}\ p\Big(\int_{\R^3}|\wt u_n'|^{p^*}\,dx\Big)^{-\frac{p^*-p}{p^*}}\int_{\R^3}|\wt u_n'|^{p^*-2}\langle \wt u_n', \lambda v_{n,m}\rangle\,dx-\frac{1}{n}
\end{eqnarray*}
where $\wt u_n':=u_n+\theta_n'\lambda v_{n,m}+w(u_n+\theta_n'\lambda v_{n,m})$ and $\theta_n'=\theta_n'(\lambda)\in [0,1]$. Since $v_{n,m}\to 0$ in $L^q(\R^3,\R^3)$ for any $q\geq 1$ as $n,m\to\infty$ and the first integral on the right-hand side above is bounded away from 0, by H\"older's inequality the right-hand side tends to $0$ as $n,m\to\infty$. So
$$
\lim_{n,m\to\infty}\Big(\int_{\R^3}|\curlop (u_n+\lambda v_{n,m})|^p\,dx-\int_{\R^3}|\curlop u_n|^p\,dx\Big)\geq 0,
$$
and by convexity,
\begin{eqnarray}\label{eq:ineqd5}
&&\lim_{n,m\to\infty}
p\int_{\R^3}|\curlop (u_n+\lambda v_{n,m})|^{p-2}\langle \curlop (u_n +\lambda v_{n,m}), \lambda \curlop  v_{n,m}\rangle\,dx\\
&&\hspace{12mm}\geq\lim_{n,m\to\infty}\Big(\int_{\R^3}|\curlop (u_n +\lambda v_{n,m})|^p\,dx-\int_{\R^3}|\curlop u_n|^p\,dx\Big)\geq 0. \nonumber
\end{eqnarray}
for any $|\lambda| < \delta$. Let $0>\lambda>-\delta$. As $\lambda<0$, \eqref{eq:ineqd5} implies
\begin{equation}\label{eq:ineqlambda1}
\lim_{n,m\to\infty}
\int_{\R^3}|\curlop (u_n +\lambda v_{n,m})|^{p-2}\langle \curlop (u_n +\lambda v_{n,m}), \curlop v_{n,m}\rangle\,dx\leq 0.
\end{equation}
Interchanging $m$ and $n$ we obtain
\[
\lim_{n,m\to\infty}
\int_{\R^3}|\curlop (u_m +\lambda v_{m,n})|^{p-2}\langle \curlop (u_m +\lambda v_{m,n}), \curlop v_{m,n}\rangle\,dx\leq 0
\]
and as $v_{m,n}=-v_{n,m}$, this gives
\begin{equation} \label{eq:ineqlambda2}
\lim_{n,m\to\infty}
\int_{\R^3}|\curlop (u_m -\lambda v_{n,m})|^{p-2}\langle \curlop (u_m -\lambda  v_{n,m}), \curlop v_{n,m}\rangle\,dx\geq 0.
\end{equation}
Let
\[
\Omega_{n,m} := \{x\in \Omega: |v_n(x)-v_m(x)| < 1\}
\]
and 
\begin{eqnarray*}
F_{n,m} := \big\langle |\curlop (v_n +\lambda v_{n,m})|^{p-2} \curlop (v_n +\lambda v_{n,m}) - |\curlop (v_m -\lambda v_{n,m})|^{p-2} \curlop (v_m -\lambda v_{n,m}),\quad&&\\
\curlop (v_n-v_m + 2\lambda v_{n,m})\big\rangle.&&
\end{eqnarray*}
We may assume $|\lambda|<\frac12$. Observe that \cite{Simon} implies $F_{n,m}\ge 0$. Since $v_n\to v_0$ a.e., the characteristic function of $\Omega_{n,m}$, $\chi_{\Omega_{n,m}}\to 1$ a.e. as $n,m\to\infty$
and
\begin{equation}\label{eq:vmn}
v_n-v_m+2\lambda v_{n,m} = (1+2\lambda)v_{n,m} = (1+2\lambda)(v_n-v_m) \quad\text{ a.e. in } \Om_{n,m}
\end{equation}
where $1+2\lambda>0$. Since $\curlop u_n = \curlop v_n$, in view of
\eqref{eq:ineqlambda1} and \eqref{eq:ineqlambda2} we obtain
\[
0\leq  \lim_{n,m\to\infty}\io F_{n,m}\chi_{\Omega_{n,m}}\,dx \leq 0.
\] 
It follows passing to a subsequence that $F_{n,m}\to 0$ a.e. in $\Omega$. Now the inequalities
\begin{eqnarray*}
	F_{n,m} & \gtrsim & |\curlop v_n-\curlop v_m+2\lambda \curlop v_{n,m}|^p \quad \text{if } p\ge 2, \\ 
	F_{n,m} & \gtrsim & |\curlop (v_n+\lambda v_{n,m})|^{p-2}+|\curlop (v_m-\lambda v_{n,m})|^{p-2}) |\curlop v_n-\curlop v_m+ 2\lambda \curlop v_{n,m}|^2\\
	&& \text{if } 1<p<2
\end{eqnarray*}
(see e.g. \cite{Simon})  imply 
\begin{equation} \label{eq15}
\curlop v_n -\curlop v_m\to 0\hbox{ a.e. in }\Omega.
\end{equation}
Indeed, for a.e. $x\in\Omega$ there exists $n_x$ such that if $n,m\ge n_x$, then $x\in\Omega_{n,m}$ and by \eqref{eq:vmn}, $v_n-v_m+2\lambda v_{n,m} = (1+2\lambda)(v_n-v_m)$. So \eqref{eq15} holds if $p\ge 2$. For $1<p<2$ the conclusion will follow again if we can show that the sequence $(\curlop v_n)(x)$ is bounded. Choosing a larger $n_x$ if necessary and fixing $m\ge n_x$ we have $F_{n,m}(x)\le 1$ for all $n\ge m$ because $F_{n,m}\to 0$ a.e. in $\Omega$. This implies boundedness of $(\curlop v_n)(x)$ (cf. \cite[Corollary 2.2]{SzulkinWillem}), so \eqref{eq15} holds also in this case.    
Hence $\curlop v_n \to \wt v$ a.e. in $\Omega$ for some $\wt v$. As $v_n\rh v_0$ in $\Wc$, we must have $\wt v = \curlop v_0$. 
Finally, choosing $\Omega_k := B(0,k)$ and using the diagonal procedure, we obtain $\curlop v_n\to \curlop v_0$ a.e. in $\r3$ after taking subsequences. This completes the proof of {\em Claim 1}.

\medskip

{\em Claim 2. $u_0$ is a critical point of $J$.}\\
Recall $\wt u_n = u_n+\theta_n\lambda z+w(u_n+\theta_n\lambda z)$ and let $\lambda>0$ in \eqref{eq:firstspcurl}. By convexity again and by \eqref{eq:firstspcurl} we obtain
\begin{eqnarray*}
	&&p \int_{\R^3}|\curlop (u_n+\lambda z)|^{p-2}\langle \curlop (u_n+\lambda z),\lambda\curlop  z\rangle\,dx \\
	&& \qquad \geq \int_{\R^3}|\curlop (u_n+\lambda z)|^p\,dx-\int_{\R^3}|\curlop u_n|^p\,dx\\
	&&\qquad \geq S_{p,\curl} \ p\Big(\int_{\R^3}|\wt u_n|^{p^*}\,dx\Big)^{-\frac{p^*-p}{p^*}}\int_{\R^3}|\wt u_n|^{p^*-2}\langle \wt u_n, \lambda z\rangle \,dx-\frac{1}{n}.
\end{eqnarray*}
Passing to a subsequence, $\theta_n\to \theta_0=\theta_0(\lambda)\in [0,1]$. 
Since $u_n+\lambda z\rh u_0+\lambda z$, we obtain using Claim 1
\begin{eqnarray}
&&\int_{\R^3}|\curlop (u_0+\lambda z)|^{p-2}\langle \curlop (u_0+\lambda z), \curlop  z\rangle\,dx 
 \ge S_{p,\curl} \lim_{n\to\infty}\Big(\int_{\R^3}|\wt u_n|^{p^*}\,dx\Big)^{-\frac{p^*-p}{p^*}} \times \label{eq16} \\
&& \qquad \times \int_{\R^3}\langle |u_0+\theta_0\lambda z +w(u_0+\theta_0 \lambda z)|^{p^*-2}(u_0+\theta_0\lambda z +w(u_0+\theta_0 \lambda z), z\rangle \,dx. \nonumber
\end{eqnarray} 
Since
\begin{equation} \label{tight}
	\big||u_n+\theta_n\lambda z|^{p^*}-|u_n|^{p^*}\big| \lesssim  
	|\lambda z|\big(|u_n|^{p^*-1}+ |\lambda z|^{p^*-1}\big),
\end{equation}
the family $\big(|u_n+\theta_n\lambda z|^{p^*}-|u_n|^{p^*}\big)$  is uniformly integrable and tight.
Hence passing to a subsequence and using \eqref{eqeq}, Vitali's convergence theorem and  \eqref{eqpsiun},
\begin{eqnarray*}
	\lim_{n\to\infty}\int_{\R^3}|\wt u_n|^{p^*}\,dx &\leq& \lim_{n\to\infty}\int_{\R^3}|u_n+\theta_n\lambda z|^{p^*}\,dx \\ 
	&=& \lim_{n\to\infty} \Big(\ir3(|u_n+\theta_n\lambda z|^{p^*}-|u_n|^{p^*})\,dx +\ir3|u_n|^{p^*}\,dx \Big)  \\
	&=& \int_{\R^3}(|u_0+\theta_0\lambda z|^{p^*}-|u_0|^{p^*})\,dx + S_{p,\curl}^{\frac{p^*}{p^*-p}}.
\end{eqnarray*}
It follows then from \eqref{eq16} that  
\begin{eqnarray*}
	&&\int_{\R^3}|\curlop (u_0+\lambda z)|^{p-2}\langle \curlop (u_0+\lambda z),\curlop  z\rangle\,dx\\
	&&\geq
	S_{p,\curl} \Big(\int_{\R^3}(|u_0+\theta_0\lambda z|^{p^*}-|u_0|^{p^*})\,dx + S_{p,\curl}^{\frac{p^*}{p^*-p}}\Big)^{-\frac{p^*-p}{p^*}} \\
&& \qquad \times \int_{\R^3}|u_0+\theta_0\lambda z+w(u_0+\theta_0\lambda z)|^{p^*-2}\langle u_0+\theta_0\lambda z+w(u_0+\theta_0\lambda z),  z\rangle \,dx.
\end{eqnarray*}
Now, letting $\lambda\to 0$, we get
\begin{eqnarray*}
	&&\int_{\R^3}|\curlop u_0|^{p-2}\langle \curlop u_0,\curlop  z\rangle\,dx
	\geq \int_{\R^3}|u_0+w(u_0)|^{p-2}\langle u_0+w(u_0),z\rangle \,dx.
\end{eqnarray*}
Since $w(u_n)=0$ and $w(u_n)\rh w(u_0)$ according to Theorem \ref{Th:Concentration}, $w(u_0)=0$. Therefore $J'(u_0)z=0$ for any $z\in\Wc$ and $u_0$ is a solution to \eqref{eq} which proves \emph{Claim 2}.

\medskip

Now we can complete the proof. Since $u_0, u_n\in\cN$, 
we obtain using Fatou's lemma that
\begin{eqnarray*}
	\inf_\cN J & = &  J(u_n)+o(1) = J(u_n)-\frac1p J'(u_n)u_n + o(1) = \frac13|u_n|^{p^*}_{p^*} + o(1) \\
	& \ge & \frac13|u_0|^{p^*}_{p^*} + o(1) = J(u_0)-\frac1pJ'(u_0)u_0+o(1)=J(u_0) + o(1).
\end{eqnarray*}
Hence $J(u_0)=\inf_\cN J$. Moreover, $u_n\to u_0$ in $L^{p^*}(\r3,\r3)$, and using this it is easy to see that $u_n\to u_0$ in $\Wc$. Replacing $u_n$ with $T_{s_n,y_n}u_n$ we obtain $s_n^{3/p^*}u_n(s_n\cdot +y_n) \to u_0$ as claimed.

\medskip

(c) The first part of the claim follows from (b) and Lemma \ref{equiv}. It remains only to show that if $u_0$ satisfies equality in  \eqref{eq:neq}, then there exist unique $t(u_0)>0$ and $w\in\cW$ such that $t(u_0)(u_0+w)\in\cN$ and is a minimizer for $J|_\cN$. By \eqref{eqeq} and \eqref{eq:scurl}, $w=w(u_0)$. It is easy to see that $w(tu)=tw(u)$ for any $u$ and any $t> 0$ (the proof may be found in \cite[Lemma 4.2]{MedSz}). Since 
$$
J(t(u_0+w(u_0)) = \frac{t^p}p|\curlop u_0|_p^p -\frac{t^{p^*}}{p^*}|u_0+w(u_0)|^{p^*}_{p^*},
$$ 
there exists a unique $t=t(u_0)$ such that $\frac d{dt}J(t(u_0+w(u_0))=0$. Then $t(u_0)(u_0+w(u_0))\in\cN$ and
\[
S_{p,\curl} = \frac{|\nabla\times u_0|_p^p}{|u_0+w(u_0)|_{p^*}^p} = \frac{|\nabla\times t(u_0)u_0|_p^p}{|t(u_0)(u_0+w(u_0))|_{p^*}^p}.
\] 
By Lemma \ref{equiv}, $t(u_0)(u_0+w(u_0))$ is a minimizer for $J|_\cN$.

\medskip

(a) By Lemma \ref{lemmaScurlS}, $S_{p,\curl} \ge S_p\cdot \cH_p$ and by part (b), there exists $u=v+w(v)$ for which $S_{p,\curl}$ is attained. Suppose $S_{p,\curl}=S_p\cdot \cH_p$. As
\[
S_p\Big(\ir3|v+w(v)|^{p^*}\,dx\Big)^{p/p^*} \le S_p\Big(\ir3|v|^{p^*}\,dx\Big)^{p/p^*} \le \ir3|\nabla v|^p\,dx,
\]
 all inequalities  in \eqref{eq:5} with $\eps=0$, and hence also in \eqref{eq:SpcurlSpH}, become equalities. It follows in particular that $\left|\nabla|v|\right| = |\nabla v|$ a.e. and $|v|$ is of the form \eqref{eq:AT}. Since $|v|$ is continuous and positive, $v/|v|\in \cD^{1,p}_{loc}(\r3,\r3)$ and by Lemma \ref{equality}, $v=|v|e_0$ where $e_0$ is a constant vector. Since $\div(v)=0$, this is impossible. Hence $S_{p,\curl}>S_p\cdot \cH_p$.
\end{altproof}

\section{Symmetry}\label{sec:symmetry}

Let 
$$
g=\begin{pmatrix} \cos \alpha & -\sin  \alpha & 0\\
\sin  \alpha &  \cos \alpha & 0\\
0& 0 & 1
\end{pmatrix} \in\cO:=\SO(2)\times\{1\}\subset \SO(3)
$$ 
with $\alpha \in\R$ and let $u=(u_1,u_2,u_3):\R^3\to\R^3$. We define 
$$(g\star u)(x) := g\cdot u(g^{-1} x)
=\begin{pmatrix}  u_1 (g^{-1}x) \cos\alpha - u_2 (g^{-1}x) \sin\alpha\\
u_1 (g^{-1}x) \sin\alpha + u_2 (g^{-1}x) \cos\alpha\\
u_3 (g^{-1}x)\end{pmatrix}$$ 
for $x\in\R^3$.
It is straightforward to verify that
\[
|\curlop (g\star u)(x)| = |\curlop u (g^{-1}x)|, 
\qquad x \in \mathbb{R}^3,\; g \in \mathcal{O}.
\]
Using this it is easy to see that $\cO$ induces an isometric action on $\Wc$, the functional $J$ is invariant under this action and so are the subspaces $\cV,\cW$ (cf. \cite[Proposition 6.1]{BartschMederski2}). In particular, $\Wco = \cV_\cO \oplus \cW_\cO$.

Recall that $T_{s,y}(u):= s^{3/p^*}u(s\cdot +y)$.

\begin{Lem}\label{lem:Solimini2}
Suppose that $(v_n)\subset \Wco$ is bounded. Then $v_n\to 0$ in $L^{p^*}(\R^3,\R^3)$ if and only if $T_{s_n,y_n}(v_n)\weakto 0$ in $\cDp$ for all $(s_n)\subset (0,\infty)$ and $(y_n)\subset \{0\}\times\{0\}\times\R$.
\end{Lem}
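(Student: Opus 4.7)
The plan is to reduce to Lemma \ref{lem:Solimini} applied to the $\cV_\cO$--component of $(v_n)$ (bounded in $\cD^{1,p}(\R^3,\R^3)$ by the Helmholtz decomposition of Lemma \ref{defof}, since $T_{s,y}$ preserves $\cV$) and then invoke the $\cO$-equivariance to push the translation centres onto the $x_3$-axis.

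The ``$\Rightarrow$'' direction is routine: since $T_{s_n,y_n}$ is an $L^{p^*}$-isometry, $v_n\to 0$ in $L^{p^*}$ forces $T_{s_n,y_n}(v_n)\to 0$ in $L^{p^*}$, and boundedness of the $\cV$-part in $\cD^{1,p}$ then identifies every weak $\cD^{1,p}$-cluster point as $0$.

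For ``$\Leftarrow$'' I would argue by contradiction: if $v_n\not\to 0$ in $L^{p^*}$, Lemma \ref{lem:Solimini} produces $(s_n)\subset(0,\infty)$ and $(y_n)\subset\R^3$ with, along a subsequence, $T_{s_n,y_n}(v_n)\weakto v_0\neq 0$ in $\cD^{1,p}$. Write $y_n=(y_n^H,y_n^3)$ with $y_n^H=(y_n^1,y_n^2)$. If $|y_n^H|/s_n$ stays bounded, pass to a subsequence with $y_n^H/s_n\to\xi^{\ast}\in\R^2$ and set $\tilde y_n:=(0,0,y_n^3)\in\{0\}\times\{0\}\times\R$; then $T_{s_n,\tilde y_n}(v_n)(x)=T_{s_n,y_n}(v_n)(x-(y_n^H,0)/s_n)\weakto v_0(\cdot-(\xi^{\ast},0))\neq 0$ by the translation continuity of weak convergence in $\cD^{1,p}$, which already contradicts the hypothesis since $\tilde y_n$ lies on the $x_3$-axis.

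The main obstacle is the unbounded case $|y_n^H|/s_n\to\infty$, where no translation to the axis can preserve the weak limit. Here I plan to exploit the $\cO$-equivariance identity
\[
T_{s_n,gy_n}(v_n)=g\star T_{s_n,y_n}(v_n)\weakto g\star v_0
\]
for every $g\in\cO$, derived from $g\star v_n=v_n$ and the linearity of $g$. Given $k\in\N$, I pick rotations $g_0=\id,g_1,\dots,g_{k-1}\in\cO$ by distinct angles; since each $g_j-g_\ell$ ($j\neq\ell$) is invertible on the $x_1x_2$-plane, the centres $g_jy_n$ satisfy $|g_jy_n-g_\ell y_n|\gtrsim|y_n^H|\gg s_n$. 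Fixing $R$ with $\int_{B(0,R)}|v_0|^{p^*}\,dx\geq\tfrac12|v_0|_{p^*}^{p^*}$, the balls $B(g_jy_n,s_nR)$ are pairwise disjoint for large $n$; using that weak $\cD^{1,p}$-convergence implies weak $L^{p^*}$-convergence (Sobolev embedding), that $|g\star v_0|=|v_0|$ pointwise, and the weak lower semicontinuity of the $L^{p^*}$-norm, a change of variables gives
\[
\liminf_{n\to\infty}\int_{B(g_jy_n,s_nR)}|v_n|^{p^*}\,dx\geq\int_{B(0,R)}|v_0|^{p^*}\,dx
\]
for each $j$, and summing over $j=0,\dots,k-1$ yields $\liminf_n|v_n|_{p^*}^{p^*}\geq\tfrac{k}{2}|v_0|_{p^*}^{p^*}$ for every $k$, contradicting the boundedness of $(v_n)$ in $L^{p^*}$. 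This ``symmetric bubbling'' step is the crux of the argument; weak lower semicontinuity is forced on us because the critical Sobolev embedding into $L^{p^*}$ is not compact.
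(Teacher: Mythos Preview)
Your proof is correct and follows essentially the same route as the paper's: argue by contradiction via Lemma~\ref{lem:Solimini}, use $\cO$-equivariance to show that the horizontal part of $s_n^{-1}y_n$ must stay bounded (otherwise rotated copies of the bubble sit in arbitrarily many disjoint balls, contradicting boundedness), and then shift the translation centre onto the $x_3$-axis to obtain a nonzero weak limit there. The only cosmetic difference is that you count $L^{p^*}$-mass in the disjoint balls via weak lower semicontinuity, whereas the paper counts $|\nabla\cdot|_p^p$-mass via a duality pairing with $\nabla v_0$.
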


\begin{proof}	
In view of Lemma~\ref{lem:Solimini} it is enough to show that the latter condition is sufficient. 
	
Suppose $v_n\not\to 0$ in $L^{p^*}(\R^3,\R^3)$. Then there exist $(s_n)\subset (0,\infty)$ and $(y_n)\subset\R^3$ such that $T_{s_n,y_n}(v_n)\weakto v_0\ne 0$ in $\cDp$. 
We can find $c>0$ and $n_0$ such that  
\begin{equation*}
\ir3\langle |\nabla v_0|^{p-2}\nabla v_0, \nabla T_{s_n,y_n}(v_n)\rangle\,dx \ge c, \quad n\ge n_0.
\end{equation*}
Let $B(x,r)$ denote the ball of radius $r$ and center at $x\in\r3$. If $R$ is large enough, then, putting $s_n^{-1}y_n =: z_n = (z_n^1,z_n^2,z_n^3)\in \R^3$, we have
\begin{eqnarray} \label{bddaway}
\qquad|\nabla T_{s_n,0}(v_n)|_{L^p(B(z_n,R),\r3)} & \gtrsim & \int_{B(z_n,R)}\langle |\nabla v_0(\cdot-z_n)|^{p-2}\nabla v_0(\cdot-z_n), \nabla T_{s_n,0}(v_n)\rangle\,dx \\
& = & \int_{B(0,R)}\langle |\nabla v_0|^{p-2}\nabla v_0, \nabla T_{s_n,y_n}(v_n)\rangle\,dx \ge c/2, \quad n\ge n_0. \nonumber
\end{eqnarray}
Let $n(R)$ be the maximal number of disjoint balls in the family $\{B(gz_n,R)\}_{g\in\cO}$. Then $n(R)\to\infty$ if $|(z_n^1,z_n^2)|\to\infty$. As
\[
\ir3|\nabla T_{s_n,y_n}(v_n)|^p\,dx = \ir3|\nabla T_{s_n,0}(v_n)|^p\,dx 
\ge n(R)\int_{B(z_n,R)}|\nabla T_{s_n,0}(v_n)|^p\,dx \ge n(R)c_0 
\]
for some $c_0>0$ and all $n\ge n_0$ according to  \eqref{bddaway}, $n(R)$ is bounded and hence so is the sequence $(z_n^1,z_n^2)$. Passing to a subsequence, $(z_n^1,z_n^2)\to (z_0^1,z_0^2).$

 Let $\Phi\in \cC_0^\infty(\r3,\R^{3\times 3})$. We have
\begin{eqnarray*}
&&\lim_{n\to\infty}\ir3\langle\Phi,\nabla T_{s_n,(0,0,y_n^3)}(v_n)\rangle\,dx
= \lim_{n\to\infty}\ir3\langle \Phi(\cdot+(z_n^1,z_n^2,0)),\nabla T_{s_n,y_n}(v_n)\rangle\,dx \\
&& \quad= \lim_{n\to\infty}\ir3\langle \Phi(\cdot+(z_0^1,z_0^2,0)),\nabla T_{s_n,y_n}(v_n)\rangle\,dx \\
&& \quad=\ir3\langle \Phi(\cdot+(z_0^1,z_0^2,0)),\nabla v_0\rangle\,dx 
= \ir3\langle \Phi,\nabla v_0(\cdot-(z_0^1,z_0^2,0))\rangle\,dx,
\end{eqnarray*}
i.e. $T_{s_n,(0,0,y_n^3)}(v_n)\weakto  v_0(\cdot-(z_0^1,z_0^2,0))\neq 0$.
\end{proof}

\begin{altproof}{Theorem \ref{Th:main2}} It is clear that $S_{p,\curl}^{\cO}\geq S_{p,\curl}$.
In order to prove (b) and (c), we proceed as in the proof of Theorem~\ref{Th:main1}, 
with the only difference that we now apply Lemma~\ref{lem:Solimini2} instead of Lemma \ref{lem:Solimini}. Moreover, we use the Palais principle of symmetric criticality \cite[Theorem 5.1]{Palais} to obtain solutions of \eqref{eq}.

We show (d) and $S_{3/2,\curl}^{\cO}\leq4\pi$.
Let $p=3/2$ and let $u$ be given by \eqref{defu}.
Then direct computations show that $\div(u)\neq 0$, $\div(|u|u)=0$, $\curlop u=4(1+|x|^2)^{-1}u$ and $|u|=3(1+|x|^2)^{-1}|w|$. Hence
$$|\curlop u|^{-1/2}\curlop u=12^{-1/2}(1+|x|^2)|w|^{-1/2}4(1+|x|^2)^{-1}u=\frac{2}{\sqrt{3}}|w|^{-1/2}u$$
and
$$\curlop\big(|\curlop u|^{-1/2}\curlop u\big)=\frac{8}{\sqrt{3}}|w|^{-1/2}(1+|x|^2)^{-1}u=\frac{8}{3\sqrt{3}}|w|^{-3/2}|u|u.$$
So $u$ given by \eqref{defu} solves \eqref{eq} provided that $|w|=4/3$. It is easy to see that this $u$ is $\cO$-equivariant if (and only if) $w=(0,0,\pm4/3)$.
Now observe that
\begin{equation*}
J(u)=J(u)-\frac23J'(u)(u)=\frac13\int_{\R^3}|u|^{3}\,dx=\frac{64}3\int_{\R^3}\frac{1}{(1+|x|^2)^3}\,dx 
=\frac{16}3\pi^2. 
\end{equation*}
Since $u\in \Wco\setminus\cW$ and $\inf_{\cN_\cO}J(u)=\frac13\big(S_{3/2,\curl}^{\cO}\big)^{2}$, we obtain that
$$S_{3/2,\curl}^{\cO}\leq 4\pi.$$
\end{altproof}

\begin{altproof}{Theorem \ref{Th:main3}}
Since $J$ is invariant with respect to $\cT$ and $\cS$, we proceed as in the proof of Theorems ~\ref{Th:main1} and \ref{Th:main2}. Note only that Lemma \ref{lem:Solimini2} applies also here because if $v_n$ are respectively as in \eqref{eq:formulauU} and \eqref{eq:formulauU2}, then so are $T_{s_n,y_n}(v_n)$ with $y_n=(0,0,y_n^3)$.  
\end{altproof}

\section{New approach to minimizing sequences in the Sobolev inequality}\label{sec:Lions}

The proof of Theorem \ref{Th:LionsW} is basically the same as that of Theorem \ref{Th:main1}(b) though some details become simpler. Instead of working in $\Wc$ we consider the space $\cD^{1,p}(\R^N)$ (note that there are no subspaces $\cV$ and $\cW$ here). For the reader's convenience and since the method appears to be applicable to other Sobolev-type inequalities, we provide a full argument below.

The energy functional associated with \eqref{eq:plap} is given by
\begin{equation}\label{eq:actionN}
J(u):=\frac1p\int_{\R^N} |\nabla u|^p\,dx- \frac1{p^*}\int_{\R^N} |u|^{p^*}\, dx,\quad u\in \cD^{1,p}(\R^N).
\end{equation}
Let
\begin{eqnarray}\label{def:NehN}
\cN &:= & \{u\in \cD^{1,p}(\R^N)\setminus\{0\}: J'(u)u=0\} \\
& = & \Big\{u\in \cD^{1,p}(\R^N)\setminus\{0\}: \int_{\R^N}|\nabla  u|^p=\int_{\R^N}|u|^{p^*}\, dx\Big\} \nonumber
\end{eqnarray}
be the usual  {\em Nehari manifold}.

\medskip

\begin{altproof}{Theorem \ref{Th:LionsW}}
Let  $(u_n)\subset \cD^{1,p}(\R^N)\setminus\{0\}$ be a minimizing sequence for \eqref{eq:Sobolevineq}. The mapping $t\mapsto J(tu_n)$, $t>0$, has a unique critical point $t_n$ and it is easy to see that  $tu_n\in\cN$ if and only if $t=t_n$. Since also the sequence $(t_nu_n)$ is minimizing, we may assume without loss of generality that $(u_n)\subset \cN$.  Since
	\begin{equation} \label{acN}
	J(u_n) = J(u_n) -\frac1{p^*}J'(u_n)u_n = \frac1N|\nabla u_n|^p_p= \frac1N|u_n|^{p^*}_{p^*} \quad \text{and } \quad  \frac{|\nabla u_n|_p^p}{|u_n|_{p^*}^p} \to S_p,
	\end{equation}
	 it follows that $(u_n)$ is bounded and bounded away from 0 both in $\cD^{1,p}(\R^N)$ and in $L^{p^*}(\rn)$, hence $|u_n|_{p^*}$ does not converge to $0$.
	Therefore, passing to a subsequence and using \cite{Solimini}, 
	$$T_{s_n,y_n}(u_n):=s_n^{N/p^*}u_n(s_n\cdot+y_n)\weakto u_0\hbox{ in } \cD^{1,p}(\R^N)$$ 
	for some $u_0\neq 0$, $(s_n)\subset (0,\infty)$ and $(y_n)\subset \R^N$. As $T_{s_n,y_n}$ is an isometric isomorphism of $\cD^{1,p}(\R^N)$, taking subsequences again we also have that $T_{s_n,y_n}(u_n)\to u_0$ a.e. in $\R^N$. We assume without loss of generality that $s_n=1$ and $y_n=0$. 
	
	Consider the map $\psi:\cD^{1,p}(\R^N)\setminus\{0\}\to\R$ given by 
	$$\psi(u):=\Big(\int_{\R^N}|u|^{p^*}\,dx\Big)^{\frac{p}{p^*}}$$
	and note that \eqref{acN} implies
	$\psi(u_n)\to S_p^{\frac{p}{p^*-p}}$.

	Let $z\in \cD^{1,p}(\R^N)$.
	Since $\psi(u_n)$ is bounded away from zero, we can find $\delta>0$ such that
	$$u_n+\lambda z\in\cD^{1,p}(\R^N)\setminus\{0\} \hbox{ for all }\lambda\in [-\delta,\delta]\hbox{ and }n\geq 1.$$
	Observe that by \eqref{eq:Sobolevineq}
	$$
	\int_{\R^N}|\nabla (u_n+\lambda z)|^p\,dx\geq S_p\, \psi(u_n+\lambda z),
	$$
	and passing to a subsequence we may assume
	$$\Big|\int_{\R^N}|\nabla u_n|^p\,dx-S_p\,\psi(u_n)\Big|<\frac1n.$$
	Hence by the mean value theorem there is $\theta_n=\theta_n(\lambda)\in [0,1]$ such that
	\begin{eqnarray}\label{eq:firstspcurlN}
	&&\quad\int_{\R^N}|\nabla(u_n+\lambda z)|^p\,dx-\int_{\R^N}|\nabla u_n|^p\,dx \geq 
	S_p \big(\psi(u_n+\lambda z)-\psi(u_n)\big)-\frac{1}{ n}\\ \nonumber
	&& = S_{p}\,\psi'(u_n+\theta_n\lambda z)\lambda z-\frac1n = S_p \, p\Big(\int_{\R^N}|\wt u_n|^{p^*}\,dx\Big)^{-\frac{p^*-p}{p^*}}\int_{\R^N}|\wt u_n|^{p^*-2}\langle \wt u_n, \lambda z\rangle \,dx-\frac{1}{n}
	\end{eqnarray}
	where $\wt u_n:=u_n+\theta_n\lambda z$. 
	
	Next we prove two similar claims as in the proof of Theorem \ref{Th:main1}. Recall that $u_n\to u_0$ a.e. in $\R^N$.
	
	\medskip
	
	{\em Claim 1. $\nabla u_n\to\nabla u_0$ a.e. in $\R^N$, up to a subsequence.}\\
	For $u\in\R$, let $T(u):=u$ if $|u|\le 1$ and $T(u) := \frac u{|u|}$ otherwise. 
	Then $T(u_n-u_m)\in\cD^{1,p}(\R^N)$.
	Let $v_{n,m}:=\zeta T(u_n-u_m)$ where $\zeta\in \cC_0^\infty(\R^N, [0,1])$ and $\zeta=1$ on some bounded domain $\Omega$.   If $|\lambda|$ is small enough ($|\lambda| < \delta$ for a sufficiently small $\delta$), we see that $u_n+\lambda v_{n,m}\in \cD^{1,p}(\R^N)\setminus\{0\}$ and using \eqref{eq:firstspcurlN} with $z=v_{n,m}$, we obtain
	\begin{eqnarray*}
		&&\int_{\R^N}|\nabla (u_n+\lambda v_{n,m})|^p\,dx - \int_{\R^N}|\nabla u_n|^p\,dx \\
		&& \hskip3cm \geq S_p\, p\Big(\int_{\R^N}|\wt u_n'|^{p^*}\,dx\Big)^{-\frac{p^*-p}{p^*}}\int_{\R^N}|\wt u_n'|^{p^*-2}\langle \wt u_n', \lambda v_{n,m}\rangle\,dx-\frac{1}{n}
	\end{eqnarray*}
	where $\wt u_n':=u_n+\theta_n'\lambda v_{n,m}$ and $\theta_n'=\theta_n'(\lambda)\in [0,1]$. Since $v_{n,m}\to 0$ in $L^q(\R^N)$ for all $q\geq 1$ as $n,m\to\infty$ and the first integral on the right-hand side above is bounded away from 0, by H\"older's inequality the right-hand side tends to $0$ as $n,m\to\infty$. So
	$$
	\lim_{n,m\to\infty}\Big(\int_{\R^N}|\nabla (u_n+\lambda v_{n,m})|^p\,dx-\int_{\R^N}|\nabla u_n|^p\,dx\Big)\geq 0,
	$$
	and by convexity,
	\begin{eqnarray*}\label{eq:ineqd5N}
	&&\lim_{n,m\to\infty}
	p\int_{\R^N}|\nabla (u_n+\lambda v_{n,m})|^{p-2}\langle \nabla (u_n +\lambda v_{n,m}), \lambda \nabla  v_{n,m}\rangle\,dx\\
	&&\hspace{12mm}\geq\lim_{n,m\to\infty}\Big(\int_{\R^N}|\nabla (u_n +\lambda v_{n,m})|^p\,dx-\int_{\R^N}|\nabla u_n|^p\,dx\Big)\geq 0 \nonumber
	\end{eqnarray*}
	for all $|\lambda| < \delta$. Let $0>\lambda>-\delta$. As $\lambda<0$, \eqref{eq:ineqd5N} implies
	\begin{equation}\label{eq:ineqlambda1N}
	\lim_{n,m\to\infty}
	\int_{\R^N}|\nabla (u_n +\lambda v_{n,m})|^{p-2}\langle \nabla (u_n +\lambda v_{n,m}), \nabla v_{n,m}\rangle\,dx\leq 0.
	\end{equation}
	Interchanging $m$ and $n$ we obtain
	\[
	\lim_{n,m\to\infty}
	\int_{\R^N}|\nabla (u_m +\lambda v_{m,n})|^{p-2}\langle \nabla (u_m +\lambda v_{m,n}), \nabla v_{m,n}\rangle\,dx\leq 0
	\]
	and since $v_{m,n}=-v_{n,m}$, this gives
	\begin{equation} \label{eq:ineqlambda2N}
	\lim_{n,m\to\infty}
	\int_{\R^N}|\nabla (u_m -\lambda v_{n,m})|^{p-2}\langle \nabla (u_m -\lambda  v_{n,m}), \nabla v_{n,m}\rangle\,dx\geq 0.
	\end{equation}
	Let $\Omega_{n,m} := \{x\in \Omega: |v_n(x)-v_m(x)| < 1\}$
	and 
	\begin{eqnarray*}
		F_{n,m} := \big\langle |\nabla (u_n +\lambda v_{n,m})|^{p-2} \nabla (u_n +\lambda v_{n,m}) - |\nabla (u_m -\lambda v_{n,m})|^{p-2} \nabla (u_m -\lambda v_{n,m}),\quad&&\\
		\nabla (u_n-u_m + 2\lambda v_{n,m})\big\rangle.&&
	\end{eqnarray*}
	We may assume $|\lambda|<\frac12$ and in view of
	\eqref{eq:ineqlambda1N} and \eqref{eq:ineqlambda2N} we obtain
	\[
	0\leq  \lim_{n,m\to\infty}\io F_{n,m}\chi_{\Omega_{n,m}}\,dx \leq 0.
	\] 
	Since $\chi_{\Omega_{n,m}}\to 1$ a.e., it follows passing to a subsequence that $F_{n,m}\to 0$ a.e. in $\Omega$. Now the inequalities 
	\begin{eqnarray*}
		F_{n,m} & \gtrsim & |\nabla u_n-\nabla u_m+2\lambda \nabla v_{n,m}|^p \quad \text{if } p\ge 2, \\ 
		F_{n,m} & \gtrsim & |\nabla (u_n+\lambda v_{n,m})|^{p-2}+|\nabla (u_m-\lambda v_{n,m})|^{p-2}) |\nabla u_n-\nabla u_m+ 2\lambda \nabla v_{n,m}|^2\\
		&& \text{if } 1<p<2
	\end{eqnarray*}
  imply 
	$\nabla u_n -\nabla u_m\to 0\hbox{ a.e. in }\Omega$.
	Hence $\nabla u_n\to \nabla u_0$ a.e. in $\R^N$ after taking subsequences (see the details following \eqref{eq15}).
	
	\medskip
	
	{\em Claim 2. $u_0$ is a critical point of $J$.}\\
	Recall $\wt u_n = u_n+\theta_n\lambda z$ and let $\lambda>0$ in \eqref{eq:firstspcurlN}. By convexity again and by \eqref{eq:firstspcurlN} we obtain
	\begin{eqnarray*}
		&&p \int_{\R^N}|\nabla (u_n+\lambda z)|^{p-2}\langle \nabla (u_n+\lambda z),\lambda\nabla  z\rangle\,dx \\
		&& \qquad \geq \int_{\R^N}|\nabla (u_n+\lambda z)|^p\,dx-\int_{\R^N}|\nabla u_n|^p\,dx\\
		&&\qquad \geq S_{p} \, p\Big(\int_{\R^N}|\wt u_n|^{p^*}\,dx\Big)^{-\frac{p^*-p}{p^*}}\int_{\R^N}|\wt u_n|^{p^*-2}\langle \wt u_n, \lambda z\rangle \,dx-\frac{1}{n}.
	\end{eqnarray*}
	Passing to a subsequence, $\theta_n\to \theta_0=\theta_0(\lambda)\in [0,1]$. 
	Since $u_n+\lambda z\rh u_0+\lambda z$, we obtain
	\begin{eqnarray}
	&&\int_{\R^N}|\nabla (u_0+\lambda z)|^{p-2}\langle \nabla (u_0+\lambda z), \nabla  z\rangle\,dx 
	\ge S_{p} \lim_{n\to\infty}\Big(\int_{\R^N}|\wt u_n|^{p^*}\,dx\Big)^{-\frac{p^*-p}{p^*}} \label{eq16N} \times \\
	&& \qquad \times \int_{\R^N}\langle |u_0+\theta_0\lambda z|^{p^*-2}(u_0+\theta_0\lambda z, z\rangle \,dx. \nonumber
	\end{eqnarray} 
	Since
	the family $\big(|u_n+\theta_n\lambda z|^{p^*}-|u_n|^{p^*}\big)$  is uniformly integrable and tight (see \eqref{tight}),
passing to a subsequence and using Vitali's convergence theorem we get
	\begin{eqnarray*}
		\lim_{n\to\infty}\int_{\R^N}|\wt u_n|^{p^*}\,dx &\leq& \lim_{n\to\infty}\int_{\R^N}|u_n+\theta_n\lambda z|^{p^*}\,dx \\ 
		&=& \lim_{n\to\infty} \Big(\int_{\R^N}(|u_n+\theta_n\lambda z|^{p^*}-|u_n|^{p^*})\,dx +\int_{\R^N}|u_n|^{p^*}\,dx \Big)  \\
		&=& \int_{\R^N}(|u_0+\theta_0\lambda z|^{p^*}-|u_0|^{p^*})\,dx + S_{p}^{\frac{p^*}{p^*-p}}.
	\end{eqnarray*}
	Hence using \eqref{eq16N},  
	\begin{eqnarray*}
		&&\int_{\R^N}|\nabla (u_0+\lambda z)|^{p-2}\langle \nabla (u_0+\lambda z),\nabla  z\rangle\,dx\\
		&&\geq
		S_{p} \Big(\int_{\R^N}(|u_0+\theta_0\lambda z|^{p^*}-|u_0|^{p^*})\,dx + S_{p}^{\frac{p^*}{p^*-p}}\Big)^{-\frac{p^*-p}{p^*}}   \int_{\R^N}|u_0+\theta_0\lambda z|^{p^*-2}\langle u_0+\theta_0\lambda z,  z\rangle \,dx,
	\end{eqnarray*}
	and letting $\lambda\to 0$, 
	\begin{eqnarray*}
		&&\int_{\R^N}|\nabla u_0|^{p-2}\langle \nabla u_0,\nabla  z\rangle\,dx
		\geq \int_{\R^N}|u_0|^{p-2}\langle u_0,z\rangle \,dx.
	\end{eqnarray*}
	Thus $J'(u_0)z=0$ and since $z$ is arbitrary, $u_0$ is a solution to \eqref{eq:plap} which proves \emph{Claim 2}.

	Finally, since $u_0, u_n\in\cN$, 
	we obtain using Fatou's lemma that
	\begin{eqnarray*}
		\inf_\cN J & = &  J(u_n)+o(1) = J(u_n)-\frac1p J'(u_n)u_n + o(1) = \frac1N|u_n|^{p^*}_{p^*} + o(1) \\
		& \ge & \frac1N|u_0|^{p^*}_{p^*} + o(1) = J(u_0)-\frac1pJ'(u_0)u_0+o(1)=J(u_0) + o(1).
	\end{eqnarray*}
	So $J(u_0)=\inf_\cN J$, hence  $u_n\to u_0$ in $L^{p^*}(\R^N)$ and $u_n\to u_0$ in $\cD^{1,p}(\R^N)$. Replacing $u_n$ with $T_{s_n,y_n}u_n$ we obtain $s_n^{N/p^*}u_n(s_n\cdot +y_n) \to u_0$.
\end{altproof}

\appendix

\section{Density lemma}\label{app:densisty}

\begin{Lem} \label{density}
$\Wc$ is the closure of $\cC^{\infty}_0(\R^3,\R^3)$ with respect to $\|\cdot\|$.
\end{Lem}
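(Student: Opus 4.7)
\begin{altproof}{Lemma \ref{density}}
The plan is the standard truncate-and-mollify scheme, adapted to the curl norm. Let $u\in\Wc$. Fix a cutoff $\eta\in\cC^{\infty}_0(\R^3,[0,1])$ with $\eta\equiv 1$ on $B(0,1)$ and $\supp\eta\subset B(0,2)$, and for $R>0$ set $\eta_R(x):=\eta(x/R)$. The first step is to show that $\eta_R u\to u$ in $\Wc$ as $R\to\infty$. The second step is to convolve $\eta_R u$ (now compactly supported) with a standard mollifier $\rho_{\eps}$ and let $\eps\to 0$; a diagonal argument then yields the required approximation by elements of $\cC^{\infty}_0(\R^3,\R^3)$.

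For the truncation, $|\eta_R u-u|_{p^*}\to 0$ by dominated convergence. For the curl component I would use the distributional product rule
\[
\curlop(\eta_R u)=\eta_R\,\curlop u+\nabla\eta_R\times u,
\]
so that $\curlop(\eta_R u)-\curlop u=(\eta_R-1)\curlop u+\nabla\eta_R\times u$. The first summand goes to zero in $L^p$ by dominated convergence. For the second summand, the key point is the scaling relation $\tfrac{1}{3}+\tfrac{1}{p^*}=\tfrac{1}{p}$, which allows H\"older's inequality in the form
\[
|\nabla\eta_R\times u|_p\le |\nabla\eta_R|_3\,|u\,\chi_{A_R}|_{p^*},\qquad A_R:=\{R\le|x|\le 2R\}.
\]
A change of variables gives $|\nabla\eta_R|_3=|\nabla\eta|_3$, independent of $R$, while $|u\,\chi_{A_R}|_{p^*}\to 0$ by dominated convergence since $u\in L^{p^*}(\R^3,\R^3)$. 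Hence $\eta_R u\to u$ in $\Wc$.

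For the mollification step, for fixed $R$ the vector field $v:=\eta_R u$ lies in $L^{p^*}(\R^3,\R^3)$ with compact support, and $\curlop v\in L^p(\R^3,\R^3)$ also has compact support. For $\eps>0$ sufficiently small, $\rho_{\eps}*v\in\cC^{\infty}_0(\R^3,\R^3)$, and since convolution commutes with distributional differentiation, $\curlop(\rho_{\eps}*v)=\rho_{\eps}*\curlop v$. The standard mollifier theorem then gives $\rho_{\eps}*v\to v$ in $L^{p^*}$ and $\curlop(\rho_{\eps}*v)\to\curlop v$ in $L^p$ as $\eps\to 0$, hence $\rho_{\eps}*v\to v$ in $\Wc$. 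Combining both steps via a diagonal selection $R_n\to\infty$, $\eps_n\to 0$ produces a sequence in $\cC^{\infty}_0(\R^3,\R^3)$ converging to $u$ in $\Wc$.

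The only non-routine point is the bound on $|\nabla\eta_R\times u|_p$; everything else is classical. I expect no real obstacle once the Sobolev scaling $\tfrac{1}{3}+\tfrac{1}{p^*}=\tfrac{1}{p}$ is invoked, as this is exactly what prevents the commutator term from carrying nontrivial mass at infinity.
\end{altproof}
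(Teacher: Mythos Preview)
Your proof is correct and follows essentially the same approach as the paper: truncation via a scaled cutoff followed by mollification, with the curl of the commutator term controlled by H\"older's inequality using the scaling $\tfrac{1}{3}+\tfrac{1}{p^*}=\tfrac{1}{p}$ and the scale-invariance of $|\nabla\eta_R|_3$. The only cosmetic difference is that the paper writes the curl product rule componentwise rather than in the vector form $\curlop(\eta_R u)=\eta_R\,\curlop u+\nabla\eta_R\times u$.
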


\begin{proof}
Let $\chi_R\in \cC_0^\infty(\R^3)$ be such that $|\nabla\chi_R|\le 2/R$, $\chi_R=1$ for $|x|\le R$ and $\chi_R= 0$ for $|x|\ge 2R$. Take $u=(u_1,u_2,u_3)\in \Wc$. Then $\chi_Ru\to u$ in $L^{p^*}(\R^3,\R^3)$ as $R\to\infty$. We have 
	\begin{equation} \label{chi}
		\partial_i(\chi_Ru_j)-\partial_j(\chi_Ru_i) = (\partial_i\chi_R)u_j - (\partial_j\chi_R)u_i + \chi_R(\partial_iu_j-\partial_ju_i),
		\quad i\ne j
	\end{equation}
	and
	\[
	\ir3|\partial_i\chi_R|^p|u_j|^p\,dx \le \left(\int_{R\le|x|\le 2R}|\partial_i\chi_R|^3\,dx\right)^{p/3}\left(\int_{R\le|x|\le 2R}|u_j|^{p^*}
	\,dx\right)^{p/p^*}.
	\]
	Since
	\[
	\int_{R\le|x|\le 2R}|\partial_i\chi_R|^3\,dx \le C
	\]
	for some $C>0$,
$(\partial_i\chi_R)u_j\to 0$ in $L^p(\r3)$. As $\partial_iu_j-\partial_ju_i\in L^p(\R^3)$, it follows that the left-hand side in \eqref{chi} tends to $\partial_iu_j-\partial_ju_i$ in $L^p(\R^3)$ as $R\to\infty$. 
	Hence $\chi_R u \to u$ in $\Wc$ and functions of compact support are dense in $\Wc$.
	
	Suppose now  $u\in \Wc$ has compact support. Clearly, $j_\eps*u\to u$ in $L^{p^*}(\R^3,\R^3)$ as $\eps\to 0$ where $j_\eps$ is the standard mollifier. Since
	\begin{equation*}
		\partial_i(j_\eps*u_j)-\partial_j(j_\eps*u_i) = j_\eps*(\partial_iu_j-\partial_ju_i)
	\end{equation*}
	and $\partial_iu_j-\partial_ju_i\in L^p(\R^3)$, the right-hand side above tends to $\partial_iu_j-\partial_ju_i$ in $L^p(\R^3)$ as $\eps\to 0$. This completes the proof.
\end{proof}

\section{Concentraction-compactness result}\label{appendix:concom}

\begin{altproof}{Theorem \ref{Th:Concentration}}
	Let $\vp\in\cC_0^{\infty}(\R^3)$. Similarly as in the proof of  \cite[Concentration-Compactness Lemma II]{Struwe} (see p. 45 there)
	we obtain
	\begin{equation*} \label{eq:standardLions2}
	\Big(\int_{\R^3}|\vp|^{p^*}d\bar\rho\Big)^{1/p^*} \le S_p^{-1/p}\Big(\int_{\R^3}|\vp|^p\,d\bar{\mu}\Big)^{1/p}
	\end{equation*}
	where 
	$\bar{\mu}:=\mu-|\nabla v_0|^p$ and $\bar{\rho}:=\rho-|v_0|^{p^*}$ (cf. also \cite[(3.4) and (3.5)]{MedSz}).
	Set $I=\{x\in\R^3: \mu_x:=\mu(\{x\})>0\}$. Since $\mu$ is finite and $\mu, \bar\mu$ have the same singular set,  $I$ is at most countable,
	$\mu\geq |\nabla v_0|^p+\sum_{x\in I}\mu_x\delta_x$
	and by the same argument as in \cite[p. 46]{Struwe},
	$\bar\rho = \sum_{x\in I}\rho_x\delta_x$. See also \cite[Proposition 4.2]{wa}.
	
	Using \eqref{eq:ineqF}, 
	$$\int_{\R^3}|v_n+w(v_n)|^{p^*}\,dx \le \int_{\R^3}|v_n|^{p^*}$$
	and	we see that $(w(v_n))$ is bounded.
	Thus, taking a subsequence we may assume $w(v_n)\weakto w_0$ in $\cW$ for some $w_0\in\cW$. 
	
	We shall show that 
	$w(v_n)\to w_0$ a.e. in $\R^3$ arguing as in the proof of \cite[Theorem 3.1]{MedSz}.
	Fix $l\geq 1$.
	In view of \cite[Lemma 1.1]{Le} there exists $\xi_n\in W^{1,p^*}(B(0,l))$ such that $w(v_n)=\nabla\xi_n$ and we may assume $\int_{B(0,l)}\xi_n\,dx = 0$. So by the Poincar\'e inequality, $(\xi_n)$ is bounded in $W^{1,p^*}(B(0,l))$
	and passing to a subsequence, $\xi_n\rh \xi$ in $W^{1,p^*}(B(0,l))$ and $\xi_n\to \xi$ in $L^{p^*}(B(0,l))$ for some $\xi\in W^{1,p^*}(B(0,l))$. 
	Let $\vp\in\cC_0^{\infty}(B(0,l))$.
	Since $\nabla (|\vp|^{p^*}(\xi_n-\xi)) = \nabla(|\vp|^{p^*})(\xi_n-\xi) + |\vp|^{p^*}(w(v_n)-\nabla\xi) \in\cW$, it follows from \eqref{eq:eqf} that
	\begin{eqnarray*}
	&& \int_{\R^3}|\vp|^{p^*}\langle |v_n+w(v_n)|^{p^*-2}(v_n+w(v_n)),w(v_n)-\nabla\xi\rangle \,dx \\  
	&& \qquad + \int_{\R^3}\langle |v_n+w(v_n)|^{p^*-2}(v_n+w(v_n)),\nabla (|\vp|^{p^*})(\xi_n-\xi)\rangle\,dx = 0
	\end{eqnarray*}
	which gives
	\begin{eqnarray*}
		&&\int_{\R^3}|\vp|^{p^*}\langle |v_n+w(v_n)|^{p^*-2}(v_n+w(v_n)),w(v_n)-\nabla\xi\rangle \,dx\\ 
		&&=-\int_{\R^3}\langle |v_n+w(v_n)|^{p^*-2}(v_n+w(v_n)),\nabla (|\vp|^{p^*})(\xi_n-\xi)\rangle\,dx=o(1)
	\end{eqnarray*}
	as $n\to\infty$. Since $w(v_n)\rh \nabla\xi$ in $L^{p^*}(B(0,l))$,  
	$$
	\int_{\R^3}|\vp|^{p^*}\langle |v_0+\nabla\xi|^{p^*-2}(v_0+\nabla\xi),w(v_n)-\nabla\xi\rangle \,dx=o(1),
	$$
	hence we obtain
	\begin{equation} \label{eq:est1}
	\int_{\R^3}|\vp|^{p^*}\langle |v_n+w(v_n)|^{p^*-2}(v_n+w(v_n))-|v_0+\nabla\xi|^{p^*-2}(v_0+\nabla\xi),\,  w(v_n)-\nabla\xi\rangle \,dx =o(1).
	\end{equation}
	For any $k\geq 1$, $|u_1-u_2|\ge \frac1k$, $|u_1|, |u_2| \le k$ and a constant $c_0>0$ we have (see e.g. \cite{Simon})
	\begin{equation} \label{mk}
	0 < m_{k} := \frac{c_0}{k^{p^*}} \le c_0|u_1-u_2|^{p^*} \le \langle |u_1|^{p^*-2}u_1-|u_2|^{p^*-2}u_2,\, u_1-u_2\rangle
	\end{equation}
	if $p^*\ge 2$ (i.e. $p\ge 6/5$) and
	\begin{equation} \label{mk2}
	0 < m_{k} := \frac{c_0}{k^{p^*}} \le c_0'(|u_1|^{p^*-2}+|u_2|^{p^*-2})|u_1-u_2|^2 \le \langle |u_1|^{p^*-2}u_1-|u_2|^{p^*-2}u_2,\, u_1-u_2\rangle
	\end{equation}
	if $p^*<2$.

	Let 
	$$
	\Omega_{n,k}:=\Big\{x\in \R^3: |v_n+w(v_n)-v_0-\nabla\xi|\geq  \frac1k, \ |v_n+w(v_n)|\le k,\ |v_0+\nabla\xi|\le k \Big\}
	$$
	and set $u_1=v_n+w(v_n)$, $u_2 = v_0+\nabla\xi$ in respectively \eqref{mk} and \eqref{mk2}. Now  taking into account \eqref{eq:est1} and H\"older's inequality, we get for each fixed $k$
	\begin{eqnarray*}
		&&m_k\int_{\Omega_{n,k}}|\vp|^{p^*}\,dx \\ 
		&& \quad \le \int_{\R^3}|\vp|^{p^*}\langle |v_n+w(v_n)|^{p^*-2}(v_n+w(v_n))-|v_0+\nabla\xi|^{p^*-2}(v_0+\nabla\xi), \, v_n-v_0\rangle \,dx +o(1)\\
		&& \quad \lesssim \Big(\int_{\R^3}|\vp|^{p^*}|v_n-v_0|^{p^*}\,dx\Big)^{1/p^*} +o(1) = 
		 \Big(\int_{\R^3}|\vp|^{p^*}\,d\bar{\rho}\Big)^{1/p^*}+o(1).
	\end{eqnarray*} 
	Since $\vp\in\cC_0^{\infty}(B(0,l))$ is arbitrary, 
	\begin{equation}\label{eq:Borel}
	m_k|\Omega_{n,k}\cap E|\lesssim \big(\bar{\rho}(E)\big)^{1/p^*}+o(1)
	\end{equation}
	for any Borel set $E\subset B(0,l)$ (here $|\cdot|$ denotes the Lebesgue measure). We find a decreasing sequence of open sets $E_k\supset I$ such that $|E_k|<1/2^{k+1}$. Then, taking $E=B(0,l)\setminus E_k$ in \eqref{eq:Borel}, we have
	$m_k|\Omega_{n,k}\cap (B(0,l)\setminus E_k)|=o(1)$ as $n\to\infty$ because $\supp(\bar\rho)\subset I$. Hence 
	we can find a sufficiently large $n_k$ such that $|\Omega_{n_k,k}\cap B(0,l)|<1/2^k$ and we obtain
	$$\Big|\bigcap_{j=1}^{\infty}\bigcup_{k=j}^{\infty}\Omega_{n_k,k}\cap B(0,l)\Big| \le \lim_{j\to\infty}\sum_{k=j}^{\infty}|\Omega_{n_k,k}\cap B(0,l)|\leq \lim_{j\to\infty}\frac{1}{2^{j-1}}=0.$$
	If $x\notin \bigcap_{j=1}^{\infty}\bigcup_{k=j}^{\infty}\Omega_{n_k,k}$ and $x\in B(0,l)$, then
	\begin{eqnarray*}
		&&|v_{n_k}(x)+w(v_{n_k})(x)-v_0(x)-\nabla\xi(x)|< \frac1k,\hbox{ or }|v_{n_k}(x)+ w(v_{n_k})(x)|>k,\\
		&&\hbox{ or }|v_0(x)+\nabla\xi(x)|> k
	\end{eqnarray*}
	for all sufficiently large $k$. Since $v_{n_k}+w(v_{n_k})$ is bounded in $L^{p^*}(\Om,\R^3)$, the second and the third inequality above cannot hold on a set of positive measure for all large $k$. 
	So $v_{n_k}+w(v_{n_k})\to v_0+\nabla\xi$ a.e. in $B(0,l)$ and in particular, $w(v_{n_k})\to \nabla\xi$  a.e. in $B(0,l)$. Since $w(v_n)\weakto w_0$, $w_0=\nabla \xi$ a.e. in $B(0,l)$. Now employing the diagonal procedure, we find a subsequence of $w(v_n)$ which converges to $w_0$ a.e. in $\R^3=\bigcup_{l=1}^{\infty}B(0,l)$. 
	
	Let $q\in [1,p^*)$. For $\Omega\subset \R^3$ such that $|\Omega|<+\infty$, by H\"older's inequality we get
	\begin{equation*}
	\int_{\Om} |v_n-v_0+w(v_n)-w_0|^q\,dx
	\leq C |\Omega|^{1-\frac{q}{p^*}},
	\end{equation*}
	hence by the Vitali convergence theorem, $v_n-v_0+w(v_n)-w_0\to 0$ in $L^q_{loc}(\R^3)$ after passing to a subsequence.
	
	Finally, by  the Vitali convergence theorem again, 
	$$0 = \int_{\R^3}\langle |v_n+w(v_n)|^{p^*-2}(w_n-w(v_n)), w\rangle\,dx\to \int_{\R^3}\langle |v_0+ w_0|^{p^*-2}(v_0+w_0),  w\rangle\,dx$$
	for any $w\in\cW$, hence taking into account \eqref{eq:eqf} we get $w_0=w(v_0)$ which completes the proof.
\end{altproof}

\section{Relation between $v$ and $|v|$} \label{sec:relation}

Suppose $v\in\cDp$. Since $|v(x)| \le \max\{v_i(x): i=1,2,3\}$, it follows from \cite[Theorem 6.17 and Corollary 6.18]{LiebLossBook} that $|v|\in \cD^{1,p}(\r3)$. 

We show that \eqref{CauchySchwartz} holds. For notational convenience we denote the inner product in $\r3$ by $\cdot$ instead of $\langle\cdot,\cdot\rangle$.  We have 
\begin{equation} \label{cs}
(\partial_i|v|)^2 = \frac{(v\cdot \partial_iv)^2}{|v|^2} \le |\partial_iv|^2, \qquad i=1,2,3,
\end{equation}
where the middle term should be understood as $0$ if $v(x)=0$. Summing over $i$ gives the result. 

\begin{Lem} \label{equality}
Suppose that $v(x)\neq 0$ for all $x\in\R^3$ and $v/|v|\in \cD_{loc}^{1,p}(\r3,\r3)$. Then $\left|\nabla|v|\right| = |\nabla v|$ a.e. if and only if there exists a constant vector $e_0$ such that $v=|v|e_0$ a.e.
\end{Lem}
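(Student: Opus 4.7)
The ``if'' direction is immediate: if $v = |v|e_0$ a.e. with $e_0$ a constant vector, then since $|e_0| \equiv 1$ (because $|v| = |v||e_0|$ and $|v|\not\equiv 0$), we compute $\partial_i v = (\partial_i |v|)e_0$ in the weak sense, so $|\partial_i v| = |\partial_i |v||$ for each $i$ and summing gives $|\nabla v| = |\nabla |v||$ a.e.

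For the converse, my starting point is the pointwise Cauchy--Schwarz estimate \eqref{cs} from Appendix \ref{sec:relation}: for each $i=1,2,3$,
\[
(\partial_i|v|)^2 \;=\; \frac{(v\cdot\partial_i v)^2}{|v|^2} \;\le\; |\partial_i v|^2 \quad \text{a.e.},
\]
with equality if and only if $\partial_i v(x)$ is parallel to $v(x)$ (which is possible to state since $v\ne 0$ everywhere). Assuming $|\nabla|v||^2 = |\nabla v|^2$ a.e., the equality of the sums over $i$ together with the pointwise inequalities forces equality for each index $i$ a.e. Therefore, for a.e. $x\in\R^3$ and each $i$, there exists a scalar $\lambda_i(x)\in\R$ with
\[
\partial_i v(x) = \lambda_i(x)\,v(x), \qquad i=1,2,3.
\]

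Next I would compute the weak derivative of $e:=v/|v|$. Since $v\in\cDp$, $|v|>0$ everywhere and $e\in \cD^{1,p}_{loc}(\R^3,\R^3)$ by hypothesis, the product rule for Sobolev functions applied to $v = |v|\,e$ gives
\[
\partial_i v \;=\; (\partial_i |v|)\,e + |v|\,\partial_i e \qquad \text{a.e.}
\]
Substituting $\partial_i v = \lambda_i v = \lambda_i |v|\,e$ and using $\partial_i|v| = v\cdot\partial_i v/|v| = \lambda_i|v|$ yields
\[
\lambda_i|v|\,e \;=\; \lambda_i|v|\,e + |v|\,\partial_i e,
\]
hence $|v|\,\partial_i e = 0$, and since $|v|>0$, $\partial_i e = 0$ a.e. for each $i$.

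Thus $e\in \cD^{1,p}_{loc}(\R^3,\R^3)$ has vanishing weak gradient on the connected set $\R^3$. A standard Sobolev fact (apply a mollification argument locally, or use that $\partial_i e=0$ distributionally implies $e$ is a.e. constant on each connected component) then gives that $e$ equals a constant vector $e_0$ a.e., so $v = |v|\,e_0$ a.e., as required. The only subtle point is the justification of the product rule in the step $\partial_i v = (\partial_i|v|)e + |v|\partial_i e$, but this is standard once one knows that both factors $|v|$ and $e$ lie in $\cD^{1,p}_{loc}$ with $|v|$ locally bounded and bounded away from $0$, so no essential obstacle remains.
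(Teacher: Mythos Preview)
Your proof is correct and follows essentially the same route as the paper: both deduce from equality in \eqref{cs} that $\partial_i v=\lambda_i v$, write $v=|v|\,e$ with $e=v/|v|$, apply the product rule to obtain $|v|\,\partial_i e=0$, and conclude $e$ is constant since it lies in $\cD^{1,p}_{loc}$. You additionally supply the trivial ``if'' direction (which the paper omits) and flag the product-rule justification, but the core argument is identical.
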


\begin{proof}
If $\left|\nabla|v(x)|\right| = |\nabla v(x)|$, then equality holds in \eqref{equality} for this $x$ and hence $\partial_ iv(x)=\lambda_i(x)v(x)$, $i=1,2,3$. Write $v(x) := |v(x)|e_0(x)$. Then
\[
\lambda_iv = \partial_iv = (\partial_i|v|)e_0 + |v|\partial_ie_0 = \frac{v\cdot\partial_iv}{|v|}\,e_0 + |v|\partial_i e_0 = \lambda_i|v|e_0 + |v|\partial_ie_0 = \lambda_iv + |v|\partial_ie_0.
\]
So $\nabla e_0 = 0$ a.e. and since $e_0\in\cD_{loc}^{1,p}(\r3,\r3)$, it is a constant vector as claimed.
\end{proof}

{\bf Acknowledgements.}
J.M. was partly supported by the National Science Centre, Poland (Grant No. 2023/51/B/ST1/00968) and by the Thematic Research Programme
"Variational and geometrical methods in partial differential equations",
University of Warsaw, Excellence Initiative - Research University. A. S.
was partly supported by a grant from the Magnuson foundation of the Royal Swedish Academy of Sciences.

%{\bf Compliance with Ethical Standards.} The author declare that they have no conflict of
%interests, they also confirm that the manuscript complies to the Ethical Rules applicable for this
%journal.

\end{document}